\newcommand{\be}{\begin{eqnarray}}
\newcommand{\ee}{\end{eqnarray}}
\newcommand{\beq}{\begin{equation}}
\newcommand{\eeq}{\end{equation}}
\newcommand{\beqn}{\begin{equation*}}
\newcommand{\eeqn}{\end{equation*}}
\newcommand{\slot}{\,\cdot\,}
\newcommand{\round}[1]{\lfloor#1\rfloor}
\newtheorem{thm}{Theorem}[section]
\newtheorem{cor}[thm]{Corollary}
\newtheorem{lem}[thm]{Lemma}
\newtheorem{defn}[thm]{Definition}
\newtheorem{remark}[thm]{Remark}
\newcommand\cC{{\mathcal C}}
\newcommand\cL{{\mathcal L}}
\newcommand\cM{{\mathcal M}}
\newcommand\cT{{\mathcal T}}
\newcommand\bN{{\mathbb N}}
\newcommand\bR{{\mathbb R}}
\newcommand{\ve}{\varepsilon}
\def\bfT{\mathbf{T}}
\begin{document}

\title{Quasistatic dynamics with intermittency}

\author[Juho Lepp\"anen]{Juho Lepp\"anen}
\address[Juho Lepp\"anen]{
Department of Mathematics and Statistics, P.O.\ Box 68, Fin-00014 University of Helsinki, Finland.}
\email{juho.leppanen@helsinki.fi}

\author[Mikko Stenlund]{Mikko Stenlund}
\address[Mikko Stenlund]{
Department of Mathematics and Statistics, P.O.\ Box 68, Fin-00014 University of Helsinki, Finland.}
\email{mikko.stenlund@helsinki.fi}
\urladdr{http://www.math.helsinki.fi/mathphys/mikko.html}

\keywords{Quasistatic dynamical system, intermittency, Pomeau--Manneville map, ergodic theorem, physical family of measures}

\thanks{2010 {\it Mathematics Subject Classification.} 37C60; 37D25, 37A10, 37A30} 
 


\begin{abstract}
We study an intermittent quasistatic dynamical system composed of nonuniformly hyperbolic Pomeau--Manneville maps with time-dependent parameters. We prove an ergodic theorem which shows almost sure convergence of time averages in a certain parameter range, and identify the unique physical family of measures. The theorem also shows convergence in probability in a larger parameter range. In the process, we establish other results that will be useful for further analysis of the statistical properties of the model.
\end{abstract}

\maketitle


\subsection*{Acknowledgements}
This work was supported by the Jane and Aatos Erkko Foundation, and by Emil Aaltosen S\"a\"ati\"o. 


\section{Introduction}
In recent years time-dependent dynamical systems have gained an increasing amount of attention in the mathematics --- especially mathematical physics --- literature; see, for instance,~\cite{LasotaYorke_1996,ConzeRaugi_2007,OttStenlundYoung_2009,Stenlund_2011,Nandori_2012,StenlundYoungZhang_2013,GuptaOttTorok_2013,Kawan_2014,Kawan_2015,Aimino_etal_2015,DobbsStenlund_2014}. On the other hand, nonuniformly hyperbolic dynamical systems with neutral fixed points have been studied extensively at least since the Pomeau--Manneville map was proposed as a model of the intermittent behavior of turbulent fluids~\cite{PomeauManneville_1980, GaspardWang_1988}. In this paper we introduce a class of intermittent quasistatic dynamical systems, which combines the two, and initiate the study of their statistical properties. We begin the section by introducing the constituent intermittent dynamics and then proceed to defining their quasistatic conglomerate system.

\medskip
\noindent{\bf Notations and conventions.} The sigma-algebra on a topological space will always be that of the Borel sets. We denote the Lebesgue measure on~$[0,1]$ by~$m$ and write
$
m(f) = \int f\,dm = \int f\,dx
$
for the Lebesgue integral of a function $f:[0,1]\to\bR$, as well as $\int_a^b f\,dx = m(f1_{[a,b]})$ for $0\le a\le b\le 1$. The~$L^p$ spaces are defined in terms of the Lebesgue measure. In particular, $\|f\|_1 = \int |f|\,dx$. The space of continuous functions from~$[0,1]$ to~$\bR$, denoted by $C([0,1])$, is equipped with the uniform norm and the resulting Borel sigma-algebra.

We use the usual floor and ceiling functions for rounding real numbers~$x$ to the nearest integer:
\beqn
\lfloor x \rfloor = \max\{n\in \bN\,:\,n\le x\} 
\quad\text{and}\quad
\lceil x \rceil = \min\{n\in \bN\,:\,n\ge x\} .
\eeqn

System constants, whose values are determined solely by the parameters defining the model, are distinguished by a subindex (e.g., $C_0$) from generic constants (e.g., $C$), whose values may change from one expression to the next.

\subsection{A class of intermittent maps}

As in~\cite{LiveraniSaussolVaienti_1999}, let us define, for each $\alpha\in(0,1)$, the map $T_\alpha:[0,1]\to[0,1]:$
\beq\label{eq:PM}
T_\alpha(x) = 
\begin{cases}
x(1+2^{\alpha}x^{\alpha}) & x\in [0,\tfrac12)
\\
2x-1 & x\in [\tfrac12,1].
\end{cases}
\eeq

It follows from~\cite{LiveraniSaussolVaienti_1999} that $T_\alpha$ has an invariant SRB (Sinai--Ruelle--Bowen) measure $\hat\mu_\alpha$. This probability measure is equivalent to the Lebesgue measure~$m$. We denote the density by~$\hat h_\alpha$.

The word ``intermittent'' derives from the dynamical characteristics of such maps: The dynamics is strongly chaotic due to expansion except near the neutral (or indifferent) fixed point at the origin, where the derivative equals $1$. Once the orbit lands near the origin, it stays in its small neighborhood for a very long time before expansion has a noticeable effect again. The parameter~$\alpha$ dictates how neutral the fixed point is; the larger~$\alpha$ the longer it takes for the orbit to return to the expanding region of the interval. Consequently, correlation functions decay at a slower and slower rate (roughly as $n^{-(\frac1\alpha-1)}$, where~$n$ is the time) and the invariant density becomes more and more concentrated around the origin as~$\alpha$ increases (with an estimate $\hat h_\alpha(x)\lesssim x^{-\alpha}$). In the parameter range $0<\alpha<1$ studied here, the invariant density is nevertheless integrable. Pomeau--Manneville-type maps were originally proposed as models of the intermittent behavior observed in studies of turbulent fluids; see~\cite{PomeauManneville_1980, GaspardWang_1988}. Since then, such maps have been investigated to the extent that we do not even attempt to cover the liteature here, besides the references cited below.

\subsection{Quasistatic dynamical systems}

The notion of an abstract quasistatic dynamical system, which we now recall, was introduced in~\cite{DobbsStenlund_2014}. Discussions on its physical interpretation and significance can be found in~\cite{DobbsStenlund_2014,Stenlund_2015}.

\begin{defn}
Let~$(X,\mathscr{F})$ be a measurable space, $\cM$~a topological space whose elements are measurable self-maps $T:X\to X$, and~$\bfT$ a triangular array of the form
\beqn\label{eq:array}
\bfT = \{T_{n,k}\in\cM\ :\ 0\le k\le n, \ n\ge 1\} .
\eeqn
If there exists a piecewise continuous curve $\Gamma:[0,1]\to\cM$ such that
\beqn
\lim_{n\to\infty}T_{n,\round{nt}} = \Gamma_t
\eeqn
we say that $(\bfT, \Gamma)$ is a \emph{quasistatic dynamical system (QDS)} with \emph{state space}~$X$ and \emph{system space}~$\cM$. 
\end{defn}

Before describing the dynamics, let us define the intermittent QDS promised. 

\begin{defn}[An intermittent QDS]
Let~$X = [0,1]$ and $\cM = \{T_\alpha\,:\,0\le\alpha\le 1\}$ (equipped, say, with the uniform topology). Next, let
\beqn
\{\alpha_{n,k}\in [0,1]\ :\ 0\le k\le n, \ n\ge 1\}
\eeqn
be a triangular array of parameters and
\beqn
\gamma : [0,1] \to [0,1]
\eeqn
a piecewise continuous curve satisfying
\beqn
\lim_{n\to\infty} \alpha_{n,\round{nt}} = \gamma_t.
\eeqn
Finally, define $\Gamma_t = T_{\gamma_t}$ and
\beqn\label{eq:T_nk}
T_{n,k} = T_{\alpha_{n,k}}.
\eeqn
\end{defn}
It will often be convenient to use the parameter representation instead of the system-space representation, i.e., to refer to $\alpha_{n,k}$ and $\gamma$ instead of $T_{n,k}$ and $\Gamma$.

The time evolution of an initial state~$x\in [0,1]$ is now given by the triangular array~$\bfT$, separately on each level of the array. That is, given~$n\ge 1$, the point 
\beqn
x_{n,k} = T_{n,k}\circ\dots\circ T_{n,1}(x) \in [0,1]
\eeqn
is the state after~$k\in\{0,\dots,n\}$ steps on the~$n$th level of~$\bfT$. (We define $x_{n,0} = x$.) 

Introducing the continuous parameter $t\in[0,1]$ and setting $k = \round{nt}$, the piecewise constant curve~$t\mapsto \alpha_{n,\round{nt}}$ approximates~$\gamma$ with ever-increasing accuracy, as $n\to\infty$. In the physics terminology, it is helpful to think of~$t$ as macroscopic time and, with $n$ fixed, of $k = \round{nt}$ as the corresponding microscopic time.

Given a measurable function $f:[0,1]\to\bR$, we denote
\beqn
f_{n,k} = f\circ T_{n,k}\circ\dots\circ T_{n,1}, \quad 0\le k \le n.
\eeqn
(We define $f_{n,0} = f$.) We define the functions $S_n:[0,1]\times [0,1]\to\bR$ by
\beqn
S_n(x,t) = \int_0^{nt} f_{n,\round{s}}(x)\, ds, \quad n\ge 1.
\eeqn
Note that, given~$x$, the function~$S_n(x,\slot)$ is a piecewise linear interpolation of the Birkhoff-type sums~$\sum_{k=0}^{\round{nt}-1}  f_{n,k}(x)$, $t\in[0,1]$, and, as such, an element of $C([0,1])$. Next, we define
\beqn\label{eq:zeta}
\zeta_n(x,t) = n^{-1} S_n(x,t) = \int_0^{t} f_{n,\round{ns}}(x)\,d s.
\eeqn
Given an initial distribution~$\mu$ for~$x$, we can view~$\zeta_n$ as a random element of~$C([0,1])$.

In this paper we are interested in identifying conditions under which
\beqn\label{eq:as}
\lim_{n\to\infty}\zeta_n = \zeta
\eeqn
almost surely with respect to an initial measure, where $\zeta\in C([0,1])$ has the expression
\beqn\label{eq:zeta_limit}
\zeta(t) = \int_0^t \hat\mu_{\gamma_s}(f)\,d s.
\eeqn
For a heuristic argument why such a result might be true, see~\cite{Stenlund_2015}.
In the fully degenerate case, in which $T_{n,k}=T_\alpha$ for some $\alpha$ and all~$k$ and~$n$, Birkhoff's ergodic theorem guarantees that, given an $L^1$ function~$f$, $\lim_{n\to\infty}\zeta_n(x,t) = t\int f \,d\hat\mu_\alpha$ for almost every~$x$ with respect to the SRB measure~$\hat\mu_\alpha$ (equivalently~$m$), for all~$t\in[0,1]$. Due to the lack of an invariant measure, our approach in the proper QDS setup is not based on Birkhoff's ergodic theorem.

The notion of a physical family of measures, proposed in~\cite{Stenlund_2015}, is closely related to the problem above. Specializing to the setting of the current paper, let us recall the definition.

\begin{defn}\label{defn:physical_family}
Let $\mathscr{P} = (\mu_t)_{t\in[0,1]}$ be a one-parameter family of measures on~$[0,1]$, and suppose that the map $t\mapsto \mu_s(f)$ is measurable for all bounded continuous functions $f:[0,1]\to\bR$. Suppose there exists a measurable set~$A\subset [0,1]$ with $m(A)>0$ such that 
\beqn
\lim_{n\to\infty} \int_0^{t} f_{n,\round{ns}}(x)\,d s = \int_0^t \mu_s(f)\,d s, \quad t\in[0,1],
\eeqn
holds for all $x\in A$ and all bounded continuous functions~$f:[0,1]\to\bR$. Then we say that~$\mathscr{P}$ is a \emph{physical family of measures} for the QDS. The set~$A$ is called a \emph{basin} of~$\mathscr{P}$. 
\end{defn}
Thus, a physical family of measures captures, in the limit~$n\to\infty$, the mean behavior of the observations $f_{n,k}(x)$, $k\ge 0$, for a notable fraction of initial points~$x$ \emph{with respect to the Lebesgue measure~$m$}, which we point out is not invariant in any sense.

\subsection{Main theorem}

The following theorem is the main result of our paper, which we choose to present here because it can now be understood without additional preparation. In the course of its proof, we establish other results, interesting in their own right, that will be beneficial to further research on the statistical properties of the intermittent QDS introduced above (or similar models).

\begin{thm}\label{thm:main}
Suppose that the curve~$\gamma:[0,1]\to[0,1]$ is piecewise H\"older continuous\footnote{Precisely, there exists a finite partition of $[0,1]$ into disjoint intervals $I_1,\dots,I_m$ such that $\gamma$ is H\"older continuous on each $I_i$ with exponent~$\theta$.} with exponent $ \theta\in(0,1]$, that 
\beq\label{eq:avoid_boundaries}
\overline{\gamma([0,1])} \subset [0,\beta_*]
\eeq
for some $\beta_*\in(0,1)$, and that
\beq\label{eq:convergence_to_curve}
\lim_{n\to\infty} n^{\theta}\sup_{t\in[0,1]}|\alpha_{n,\round{nt}} - \gamma_t| < \infty.
\eeq
\begin{enumerate}[(i)]
\item If $\beta_*\ge \frac12$, then for each $f\in C([0,1])$,
\beq\label{eq:limit}
\lim_{n\to\infty}\sup_{t\in[0,1]}|\zeta_n(x,t) - \zeta(t)| = 0
\eeq
in probability, with respect to the Lebesgue measure. That is,
\beqn
\lim_{n\to\infty}m\!\left(\sup_{t\in[0,1]}|\zeta_n(x,t) - \zeta(t)|\ge \ve\right) = 0
\eeqn
for all $\ve>0$.

\medskip
\item If $\beta_*<\frac12$, then \eqref{eq:limit} holds for almost every~$x\in[0,1]$ with respect to the Lebesgue measure. The one-parameter family of measures $\mathscr{P} = (\hat\mu_{\gamma_t})_{t\in[0,1]}$ is the unique physical family of measures, and it has a basin $A$ of full measure, i.e., $m(A)=1$.
\end{enumerate}
\end{thm}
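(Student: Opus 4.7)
The plan rests on three pillars: uniform transfer-operator estimates valid on the restricted parameter range $[0,\beta_*]\subset[0,1)$, bounds on the first and higher moments of $\zeta_n(\cdot,t)$, and a soft equicontinuity argument in $t$. A structural observation that simplifies everything is that $|\partial_t \zeta_n(x,t)|\le \|f\|_\infty$ for a.e.\ $t$, so the paths $\zeta_n(x,\cdot)$ are uniformly Lipschitz on $[0,1]$; $\zeta$ is Lipschitz as well. Consequently pointwise-in-$t$ convergence $\zeta_n(\cdot,t)\to\zeta(t)$ (in probability, respectively a.s.) on a countable dense set of times automatically promotes to uniform-in-$t$ convergence of the stated type.

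For the \emph{mean}, write $h_{n,k}=\mathcal{L}_{\alpha_{n,k}}\cdots\mathcal{L}_{\alpha_{n,1}}\mathbf{1}$ for the iterated Perron--Frobenius density, so that $\mathbb{E}_m[\zeta_n(\cdot,t)]=\int_0^t m(f\cdot h_{n,\round{ns}})\,ds$. I would show $\|h_{n,\round{ns}}-\hat h_{\gamma_s}\|_1\to 0$ uniformly in $s$ away from the finitely many discontinuities of $\gamma$. The mechanism is Liverani--Saussol--Vaienti in the nonstationary setting: the operators $\mathcal{L}_\alpha$, $\alpha\in[0,\beta_*]$, share a common cone of decreasing functions with a controlled power-law singularity at $0$, and contract its projective metric at a rate uniform in $\alpha$. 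Combined with H\"older continuity of $\gamma$, the hypothesis $n^{\theta}\sup_t|\alpha_{n,\round{nt}}-\gamma_t|=O(1)$, and continuity of $\alpha\mapsto\hat h_\alpha$ in $L^1$, this yields $\sup_t|\mathbb{E}_m\zeta_n(\cdot,t)-\zeta(t)|\to 0$.

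Next I control the \emph{variance} through a double integral of correlations,
\[
\mathrm{Var}_m(\zeta_n(\cdot,t)) = 2\int_0^t\!\!\int_0^s \mathrm{Cov}_m\bigl(f_{n,\round{nr}},\,f_{n,\round{ns}}\bigr)\,dr\,ds,
\]
the key input being a uniform nonstationary bound of the form $|\mathrm{Cov}_m(f_{n,k},f_{n,\ell})|\le C\,\|f\|_{C}^{2}\,(\ell-k)^{-(1/\beta_{*}-1)}$, which I would derive as a companion estimate combining the cone contraction with analysis near the neutral fixed point. For $\beta_*\ge\tfrac12$ the rate is positive but possibly $\le 1$; truncating near the diagonal with the trivial bound $\|f\|_\infty^2$ and integrating gives $\mathrm{Var}_m\zeta_n(\cdot,t)=o(1)$, and Chebyshev plus the equicontinuity in $t$ handles (i). For $\beta_*<\tfrac12$ the correlations are summable, so $\mathrm{Var}_m\zeta_n(\cdot,t)=O(n^{-1})$. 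To obtain a.s.\ convergence for \emph{all} $n$ (not along a subsequence), I would derive a fourth-moment bound $\mathbb{E}_m|\zeta_n(\cdot,t)-\mathbb{E}_m\zeta_n(\cdot,t)|^4=O(n^{-2})$ from a four-point correlation estimate, apply Markov's inequality and Borel--Cantelli at a countable dense set of times, and extend the a.s.\ statement to uniform-in-$t$ convergence by the Lipschitz bound. The resulting set $A$ of good initial points has $m(A)=1$, which by Definition~\ref{defn:physical_family} identifies $(\hat\mu_{\gamma_t})_{t\in[0,1]}$ as a physical family with basin of full measure; uniqueness follows because any other physical family $(\mu_t)$ must satisfy $\int_0^t\mu_s(f)\,ds=\int_0^t\hat\mu_{\gamma_s}(f)\,ds$ for all $t$ and all continuous $f$, so $\mu_s=\hat\mu_{\gamma_s}$ for Lebesgue-a.e.\ $s$.

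The main technical obstacle I anticipate is establishing the uniform two- and four-point correlation bounds for the nonstationary compositions. The difficulty is that the iterated densities $h_{n,k}$ are not equilibrium densities $\hat h_{\alpha_{n,k}}$ but time-evolved objects; one must show they remain inside a cone invariant under \emph{every} $\mathcal{L}_\alpha$ with $\alpha\le\beta_*$, with a projective contraction rate that is simultaneously uniform in the parameter and polynomial with the right exponent. This is precisely where the global restriction $\beta_*<1$, as opposed to pointwise restrictions on the individual $\alpha_{n,k}$, is indispensable.
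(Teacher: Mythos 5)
Your overall strategy coincides with the paper's: split $\zeta_n-\zeta$ into a mean part, handled by showing the evolved densities $h_{n,\round{ns}}$ converge in $L^1$ to $\hat h_{\gamma_s}$ away from the discontinuities of $\gamma$, and a fluctuation part, handled by a second-moment/Chebyshev argument combined with the uniform Lipschitz continuity in $t$ for case (i) and by fourth-moment bounds plus Borel--Cantelli and the same Lipschitz extension for case (ii); the identification and uniqueness of the physical family are argued exactly as in the paper. One genuine simplification on your side: for case (i) you pass from the variance bound directly to convergence in probability of the supremum via a finite net in $t$, whereas the paper routes through a Cauchy condensation lemma in order to additionally extract an almost surely convergent subsequence with $n_{k+1}/n_k\to 1$; for the theorem as stated your shortcut is perfectly adequate, you simply forgo that extra conclusion.

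Three steps, however, are gaps as written rather than routine details. First, the covariance bound $|\mathrm{Cov}_m(f_{n,k},f_{n,\ell})|\le C\|f\|_{C}^{2}(\ell-k)^{-(1/\beta_*-1)}$ cannot hold with only the sup norm of $f$: polynomial decay rates for these maps require regularity of the observable (the paper's Theorem~\ref{thm:multi} charges $\|f\|_{C^1}$), and merely continuous observables admit no uniform rate. You need the preliminary reduction of $f\in C([0,1])$ to $f\in C^1([0,1])$ by Stone--Weierstrass, as the paper does; the same applies to your four-point estimates. Second, the mechanism is not a uniform projective (Hilbert-metric) contraction: the cone $\cC_*$ does not contract projectively at a useful uniform rate for these polynomially mixing maps. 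What is actually needed is the sequential $L^1$ memory-loss estimate for cone densities with equal integrals, uniform over admissible parameter sequences (Lemma~\ref{lem:Aimino}, imported from the time-dependent Pomeau--Manneville literature), together with the machinery of Sections~\ref{sec:C^1_cone}--\ref{sec:decorrelation} for rewriting $C^1$ observables times cone densities as signed combinations of cone elements, which is what converts that estimate into the two- and four-point correlation bounds you invoke. Third, your mean-part argument quietly assumes $L^1$-continuity of $\alpha\mapsto\hat h_\alpha$, and implicitly a perturbation bound on $\alpha\mapsto\cL_\alpha$ to compare the nonautonomous composition with frozen operators; neither is off-the-shelf --- these are among the paper's new technical results (Theorem~\ref{thm:SRB_continuous}) and require a separate proof. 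None of this invalidates your architecture, but these are the places where the real work lies.
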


The condition in~\eqref{eq:avoid_boundaries} is needed to maintain uniform control in certain estimates, mainly on the polynomial correlation decay, while the convergence rate in~\eqref{eq:convergence_to_curve} is the natural one suggested by the ``equipartition''~$\alpha_{n,k} = \gamma_{kn^{-1}}$. The assumption that the curve~$\gamma$ is H\"older continuous has a physical motivation: it could, for instance, be a Brownian path, as a consequence of external forcing on the system, and allowing jumps to occur accommodates for rare, sudden, changes in the forcing. Regarding the uniqueness of the physical family of measures, we recall that two such families, $(\mu_{\gamma_t})_{t\in[0,1]}$ and $(\mu_{\gamma_t}')_{t\in[0,1]}$, are considered the same if $\mu_{\gamma_t} = \mu_{\gamma_t}'$ except for a zero-measure set of parameters~$t$.

We reiterate that the rate of correlation decay for the intermittent QDS is polynomial. The essential distinction between cases~(i) and~(ii) is the summability of this rate in case~(ii), which yields stronger results.

As a matter of fact, we prove a bit more regarding case (i). Namely, if $f\in C^1([0,1])$, there exists a subsequence $(n_k)_{k\ge 1}$ along which the convergence in \eqref{eq:limit} is almost sure \emph{and} $\lim_{k\to\infty}n_{k+1}/n_k = 1$. Thus, the sequence converges almost surely, save for gaps of sublinearly growing length. One would like to patch the gaps and promote the result to almost sure convergence of the entire sequence, say, with the aid of the Borel--Cantelli lemma. We have been unable to do so. Alternatively, one could directly try to appeal to a sufficiently general strong law of large numbers for triangular arrays of random variables, but we have failed to find a suitable one in the probability theory literature. Either way, the critical difficulties encountered in establishing almost sure convergence in case~(i) have to do with the necessity of controlling
\beqn
\zeta_n(t)-\zeta_m(t) = \int_0^t f\circ T_{n,\round{ns}}\circ\dots\circ T_{n,1} - f\circ T_{m,\round{ms}}\circ\dots\circ T_{m,1} \,ds
\eeqn
with $n\ne m$, which involves comparing different levels of the array $\bfT$, consisting of different maps.

In view of \eqref{eq:avoid_boundaries}, let us also point out that the QDS changes its nature if the curve is allowed to meet the right-hand boundary of $[0,1]$. For instance, the invariant density of~$T_1$ is non-integrable. Nevertheless, it is possible that the result remains true, even in the almost sure sense, if the curve makes some isolated visits to the boundary. As we have tried to convey above, investigating this admittedly interesting question would require a notable amount of technical innovation.

Similar results for certain uniformly hyperbolic QDSs, enjoying exponential correlation decay, were obtained in~\cite{Stenlund_2015}. The most significant difference here is that the intermittent QDS is nonuniformly hyperbolic. Nonuniformly hyperbolic systems are inherently more difficult to analyze due to the intermittent character of the dynamics alluded to above. To make matters worse, the system at issue --- as a QDS --- is time dependent. The study of time-dependent Pomeau--Manneville maps was recently initiated in~\cite{Aimino_etal_2015}, where a statistical memory-loss result for compositions of such maps was established; that result will have a role in the proof of Theorem~\ref{thm:main}. 

Finally, the polynomial rate of correlation decay discussed above will also have an impact on other kinds of limit theorems in the future, including the assortment of techniques suitable for their proofs. For instance, one expects a central limit theorem to hold (only) in case (ii).

\medskip
\noindent{\bf How the paper is organized.}
In Section~\ref{sec:preliminaries} we discuss technical facts needed for understanding the subsequent sections. In Section~\ref{sec:C^1_cone} we elaborate on the connection of two classes of functions appearing in the proofs; this section also facilitates bounding multiple correlation functions, which is carried out in Section~\ref{sec:decorrelation}. In Section~\ref{sec:perturbation} we study the parameter dependence of the transfer operator and of the SRB density. The proof of the main result, Theorem~\ref{thm:main}, is finally given in Section~\ref{sec:proof_main}.




\medskip
\section{Preliminaries}\label{sec:preliminaries}

We denote the transfer operator associated to $T_\alpha$ and the Lebesgue measure by $\cL_\alpha$. That is,
\beqn
\cL_\alpha f(x) = \sum_{y\in T_\alpha^{-1}x}\frac{f(y)}{T_\alpha'(y)} = \frac{f(y_\alpha)}{T_\alpha'(y_\alpha)} + \frac{f(\frac x2+\frac12)}{2},
\eeqn
where $y_\alpha = y_\alpha(x)$ is the preimage of $x$ under the left branch $T_\alpha|_{[0,\frac12)}$. The following result establishes an important invariance property for the transfer operator:

\begin{lem}\label{lem:cone}
Let $0<\beta<1$ and $a(\beta) = 2^{\beta}(\beta +2)$. The transfer operator~$\cL_\alpha$ maps the convex cone
\beqn
\begin{split}
\cC_*(\beta) = \{f\in C((0,1])\cap L^1\,:\, & \text{$f\ge 0$, $f$ decreasing,} 
\\
& \text{$x^{\beta+1}f$ increasing, $f(x)\le a(\beta) x^{-\beta} m(f)$}\}
\end{split}
\eeqn
into itself, provided $0<\alpha\le \beta$.
\end{lem}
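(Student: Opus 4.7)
The plan is to verify each of the four defining properties of $\cC_*(\beta)$ for $g := \cL_\alpha f$, assuming $f \in \cC_*(\beta)$ and $0 < \alpha \le \beta$. I work with
\beqn
g(x) = \frac{f(y_\alpha)}{T'_\alpha(y_\alpha)} + \frac{f((x+1)/2)}{2}, \qquad T'_\alpha(y) = 1 + 2^\alpha(\alpha+1) y^\alpha,
\eeqn
together with the identity $x = T_\alpha(y_\alpha) = y_\alpha(1+2^\alpha y_\alpha^\alpha)$. Non-negativity of $g$ is immediate. For the decreasing property, $y_\alpha(x)$ is increasing in $x$, $T'_\alpha$ is increasing on $[0,1/2]$, and $f$ is decreasing, so both $f\circ y_\alpha$ and $1/(T'_\alpha\circ y_\alpha)$ are positive and decreasing in $x$; the second summand is manifestly decreasing.

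The third property, that $x^{\beta+1}g(x)$ is increasing, is where the hypothesis $\alpha \le \beta$ enters. For the second summand, substituting $y=(x+1)/2$ yields $x^{\beta+1}f(y)/2 = (2-1/y)^{\beta+1}[y^{\beta+1}f(y)]/2$, a product of two factors increasing in $y$ (the bracket by the cone assumption on $f$). For the first summand,
\beqn
x^{\beta+1}\cdot\frac{f(y_\alpha)}{T'_\alpha(y_\alpha)} = [y_\alpha^{\beta+1}f(y_\alpha)]\cdot\frac{(1+2^\alpha y_\alpha^\alpha)^{\beta+1}}{1+2^\alpha(\alpha+1)y_\alpha^\alpha},
\eeqn
where the bracket is increasing in $y_\alpha$ by the cone assumption. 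Setting $u = 2^\alpha y_\alpha^\alpha \in [0,1]$, the remaining factor $(1+u)^{\beta+1}/(1+(\alpha+1)u)$ has derivative in $u$ with numerator $(1+u)^\beta [(\beta-\alpha) + (\alpha+1)\beta u]$, which is nonnegative precisely because $\alpha \le \beta$. Since $y_\alpha(x)$ is increasing in $x$, the full first summand times $x^{\beta+1}$ is increasing in $x$.

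The pointwise bound $g(x) \le a(\beta) x^{-\beta} m(f)$ with $a(\beta) = 2^\beta(\beta+2)$ is the main obstacle. The cone bound $f(y_\alpha) \le a(\beta) y_\alpha^{-\beta} m(f)$ delivers
\beqn
x^\beta\cdot\frac{f(y_\alpha)}{T'_\alpha(y_\alpha)} \le a(\beta)m(f)\cdot\phi(u), \qquad \phi(u)=\frac{(1+u)^\beta}{1+(\alpha+1)u}\le 1,
\eeqn
which saturates as $x\to 0$ (i.e., $u\to 0$) and leaves no room for the second summand. My plan is to complement this with the monotonicity-derived bound $f(y_\alpha) \le m(f)/y_\alpha$, which coupled with $y_\alpha T'_\alpha(y_\alpha) \ge x$ yields the alternative $x^\beta \cdot \text{(first summand)} \le x^{\beta-1} m(f)$, tighter than the cone bound for $y_\alpha$ bounded away from $0$. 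Similarly, $f((x+1)/2) \le 2m(f)/(x+1)$ yields $x^\beta\cdot\text{(second summand)} \le x^\beta m(f)/(x+1)$. The proof then closes by a case split in $x$: near $x=0$ the cone bound on the first summand is sharp while the second summand is negligible, whereas for $x$ bounded below the monotonicity bounds apply to both summands and the constant $a(\beta) = 2^\beta(\beta+2)$ --- in which the factor $\beta+2$ reflects $T'_\alpha(1/2) = \alpha+2$ in the worst case $\alpha = \beta$ --- is tuned exactly to absorb the total. The technical heart is calibrating the cross-over threshold and checking that the two regimes stitch together uniformly in $\alpha \in (0,\beta]$ and $x \in (0,1]$.
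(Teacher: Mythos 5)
The paper itself does not prove this lemma: it is quoted with references to Liverani--Saussol--Vaienti (the case $\alpha=\beta$) and Aimino et al.\ (the adaptation to $0<\alpha\le\beta$), so there is no in-paper argument to compare against; your proposal must stand on its own. Your verification of the first three cone conditions is correct and complete: positivity, the decreasing property, and the monotonicity of $x^{\beta+1}\cL_\alpha f$ on both branches all check out, and in particular your derivative computation showing that $(1+u)^{\beta+1}/(1+(\alpha+1)u)$ is increasing in $u$ precisely when $\alpha\le\beta$ is exactly where that hypothesis enters.

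The gap is the fourth condition, $\cL_\alpha f(x)\le a(\beta)x^{-\beta}m(f)$, which is the only place the specific constant $a(\beta)=2^\beta(\beta+2)$ matters and the real content of the lemma beyond routine monotonicity checks; there you stop at a plan. You correctly assemble the two available tools (the cone bound giving $a\,m(f)x^{-\beta}\phi(u)$ with $\phi(u)=(1+u)^\beta/(1+(\alpha+1)u)$, and the monotonicity bounds $m(f)/x$ and $m(f)/(x+1)$), but the ``calibration of the cross-over threshold'' you defer is precisely the nontrivial step, and with the stated constant it is genuinely delicate: the monotonicity route requires $x^{\beta-1}+x^{\beta}/(1+x)\le a$, i.e.\ roughly $x\ge (a-1)^{-1/(1-\beta)}$, a threshold that degenerates as $\beta\to1$; below it one must show $a\,x^{-\beta}(1-\phi(u))\ge \tfrac12 f(\tfrac{x+1}{2})$, using $1-\phi(u)\ge (\alpha+1-\beta)u/(1+(\alpha+1)u)$ together with $x^{-\beta}u\ge (2y_\alpha)^{\alpha-\beta}$, and then check that the resulting condition (of the form $(2y_\alpha)^{\beta-\alpha}\le (\beta+2)(\alpha+1-\beta)/(\alpha+2)$, up to constants) meshes with the threshold uniformly over $\alpha\in(0,\beta]$. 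None of this is carried out, and the heuristic ``$\beta+2$ reflects $T_\alpha'(1/2)$'' is not a substitute. Moreover, your description of the regimes is somewhat off: the deficit $a\,x^{-\beta}(1-\phi(u))$ does \emph{not} vanish as $x\to0$ (the factor $x^{-\beta}$ compensates, and it even diverges when $\alpha<\beta$), while the second summand is \emph{not} negligible there (it tends to $f(1/2^+)/2$, of order $m(f)$); the genuinely tight spot is rather at moderate-to-large $x$ when $\beta$ is near $1$ and $\alpha$ near $0$, where $1-\phi(u)$ at $u\approx 1$ is only of size $2+\alpha-2^\beta=O(\alpha+(1-\beta))$ and the cone-bound route is too lossy, forcing the switch to the monotonicity route. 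So: right toolbox and a feasible two-regime strategy, but the constant-specific, $\alpha$-uniform verification that constitutes the heart of the lemma is missing.
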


For a proof of Lemma~\ref{lem:cone}, see~\cite{LiveraniSaussolVaienti_1999} for the original case of a single parameter ($\alpha = \beta$) and~\cite{Aimino_etal_2015} for the above adaptation to a range of parameters ($0<\alpha\le \beta$).

It follows from~\cite{LiveraniSaussolVaienti_1999} that the SRB density of $T_\alpha$ satisfies
\beqn
\hat h_\alpha \in \cC_*(\alpha).
\eeqn
Note that the cone is increasing, i.e., $\cC_*(\alpha)\subset\cC_*(\beta)$ if $\alpha\le\beta$. In particular, $\hat h_\alpha\in\cC_*(\beta)$ whenever~$\alpha\le\beta$.

We will always consider $\beta_*\in(0,1)$ fixed, and write
\beqn
a = a(\beta_*)
\quad\text{and}\quad
\cC_* = \cC_*(\beta_*)
\eeqn
for brevity. We will call a sequence $(T_{\alpha_i})_{i\ge 1}$ of Pomeau--Manneville maps an \emph{admissible sequence} if $\alpha_i\le \beta_*$ for all~$i$. To keep notations simple, we write
\beqn
T_i = T_{\alpha_i}
\quad\text{and}\quad
\cL_i = \cL_{\alpha_i}
\eeqn
for such a sequence. We also denote
\beqn
\widetilde T_{n,m} = T_n\circ\dots\circ T_m
\quad\text{and}\quad
\widetilde\cL_{n,m} = \cL_n\cdots\cL_m,\quad m<n,
\eeqn
together with
\beqn
\widetilde T_n = T_n\circ\dots\circ T_1
\quad\text{and}\quad
\widetilde\cL_n = \cL_n\cdots\cL_1.
\eeqn
Nearly all the results below apply for general admissible sequences, without additional restrictions on $\beta_*$.
Only in Section~\ref{sec:proof_main} do we need to assume $\beta_*<\frac12$ in order to check Condition (A) of Lemma~\ref{lem:QS_mean}, which requires a summable rate of correlation decay. 

Let us already recall the following key estimate from~\cite{Aimino_etal_2015}; see also ~\cite{LiveraniSaussolVaienti_1999} for a similar result in the case of a single map instead of a sequence.
\begin{lem}\label{lem:Aimino}
There exists a constant $C_0 = C_0(\beta_*)>0$ such that the following holds.
Let~$(T_i)_{i\ge 1}$ be admissible and $f,g\in\cC_*$ with $\int f\,dx = \int g\,dx$. Then, for all $n\ge 0$,
\beqn
\|\widetilde\cL_n (f-g)\|_1 \le C_0(\|f\|_1+\|g\|_1) \rho(n) 
\eeqn
where
\beqn
\rho(n) = n^{-(\frac1{\beta_*}-1)}(\log n)^\frac1{\beta_*}, \quad n\ge 2,
\eeqn
and $\rho(0) = \rho(1) = 1$.
\end{lem}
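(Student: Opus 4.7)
The plan is to adapt the truncation and sequential Lasota--Yorke scheme from the single-map analysis of \cite{LiveraniSaussolVaienti_1999} to compositions along an admissible sequence. Set $u = f - g$, so that $\int u\,dx = 0$. The heart of the matter is to separate the singular contribution of $u$ near the neutral fixed point (which the cone bound $|u(x)| \le a\,x^{-\beta_*}(\|f\|_1 + \|g\|_1)$ tames but does not contract) from the well-behaved part on $[\delta,1]$. For a cutoff $\delta \in (0,1/2)$ to be chosen in terms of $n$, I split
\[
u = u\,\mathbf{1}_{[0,\delta]} + u\,\mathbf{1}_{[\delta,1]} =: \phi_\delta + \psi_\delta.
\]
The cone estimate gives $\|\phi_\delta\|_1 \le C\delta^{1-\beta_*}(\|f\|_1+\|g\|_1)$, and this bound persists after applying $\widetilde\cL_n$ because the transfer operator is an $L^1$ contraction. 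The zero-mean condition transfers to $\int\psi_\delta\,dx = -\int\phi_\delta\,dx$, a quantity of the same order.

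The second ingredient is a bound on $\|\widetilde\cL_n \psi_\delta\|_1$ via a sequential Lasota--Yorke inequality in BV, with constants depending only on $\beta_*$. Away from the fixed point each $T_i$ is uniformly expanding, and the cone conditions (monotonicity together with monotonicity of $x^{\beta_*+1}u$) give $\|\psi_\delta\|_{BV} \lesssim \delta^{-(1+\beta_*)}(\|f\|_1+\|g\|_1)$. The natural mechanism for producing a polynomial rate is inducing on $Y=[1/2,1]$: the first-return dynamics along the sequence is uniformly hyperbolic, and the return-time tails for admissible parameters are controlled uniformly by $n^{-1/\beta_*}$. Iterating the induced operator yields exponential contraction in BV per visit to $Y$, combined with a polynomial residual for mass that lingers in a neighborhood of $0$ throughout the time interval.

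Putting the two pieces together produces a schematic bound of the form
\[
\|\widetilde\cL_n u\|_1 \le C(\|f\|_1+\|g\|_1)\Bigl[\delta^{1-\beta_*} + K(n,\delta)\,\delta^{-(1+\beta_*)} + n^{-(1/\beta_*-1)}\Bigr],
\]
where $K(n,\delta)$ is a contraction factor decaying in $n$ but with mild growth in $\delta^{-1}$ through the return-time tail. Optimising $\delta$ as a negative power of $n$ modulated by a factor of $\log n$ balances the three terms and yields the stated rate $\rho(n) = n^{-(1/\beta_*-1)}(\log n)^{1/\beta_*}$; the logarithm is the fingerprint of this two-sided optimisation. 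The main obstacle, and what distinguishes the argument from the single-map case in \cite{LiveraniSaussolVaienti_1999}, is ensuring that every constant in the Lasota--Yorke estimate and in the return-time tail bound depends only on $\beta_*$ and is uniform over all admissible sequences $(\alpha_i)_{i\ge 1}$ with $\alpha_i\le\beta_*$. This uniform control is precisely the construction carried out in \cite{Aimino_etal_2015}, via a Young-tower-type structure adapted to the whole admissible parameter range; once that machinery is in place, the final assembly of the coupling between $\widetilde\cL_n f$ and $\widetilde\cL_n g$ through the common integral is routine.
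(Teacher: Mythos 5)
You should first know that the paper does not prove this lemma at all: it is quoted directly from \cite{Aimino_etal_2015} (with \cite{LiveraniSaussolVaienti_1999} cited for the single-map case), and the cone-invariance statement of Lemma~\ref{lem:cone} is recorded immediately before it precisely because it is the input to that reference's argument. Measured against that, your write-up is not a proof either: the three estimates that would carry all the weight --- a uniform sequential Lasota--Yorke inequality, the uniform return-time tail bound, and the mechanism that assembles them into the rate $n^{-(1/\beta_*-1)}(\log n)^{1/\beta_*}$ --- are asserted in a schematic display and then explicitly delegated to \cite{Aimino_etal_2015}, with the final coupling step declared ``routine''. Deferring to that reference is exactly what the paper does, so the citation itself is not the defect; the defect is that the part you do sketch neither produces the rate nor survives scrutiny.

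Concretely: a Lasota--Yorke inequality in BV does not hold for the maps $T_\alpha$ themselves, because the derivative equals $1$ at the neutral fixed point, so variation is not uniformly contracted; such an inequality is available only for induced (first-return) maps, and constructing a uniform induced or Young-tower-type structure for arbitrary admissible \emph{sequences} is a genuinely nontrivial piece of work, not a routine adaptation --- and it is not what \cite{Aimino_etal_2015} do. Their proof, like \cite{LiveraniSaussolVaienti_1999}, works directly with the invariant cone $\cC_*$: one shows that over a block of iterates a definite, polynomially small fraction of the mass of a cone density can be matched against the other density, and iterating this partial-coupling (loss-of-memory) step over blocks yields the decay; the $(\log n)^{1/\beta_*}$ factor is produced by that iteration, not by balancing a cutoff $\delta$ against a BV norm (tower-based proofs in the autonomous case in fact give the cleaner rate without the logarithm, which is one sign that your proposed route and the stated estimate do not match). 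Your preliminary splitting $u\mathbf{1}_{[0,\delta]}+u\mathbf{1}_{[\delta,1]}$ with the cone bound $|u(x)|\le a x^{-\beta_*}(\|f\|_1+\|g\|_1)$ is fine as far as it goes (though the variation of the truncated part is $O(\delta^{-\beta_*})$, not $O(\delta^{-(1+\beta_*)})$), but it is only the easy first step; everything after it is either misattributed or missing.
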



\medskip
\section{$C^1$ functions and the cone $\cC_*$}\label{sec:C^1_cone}
Because of the invariance property (Lemma~\ref{lem:cone}), much of the initial technical work is carried out for functions belonging to the cone~$\cC_*$. However, in applications a more familiar class of functions is preferred. The following lemma --- which yields a recipe for passing results from $\cC_*$ to $C^1$ --- is (essentially) from~\cite{LiveraniSaussolVaienti_1999}. Since a detailed proof seems not to have been published, we provide it below as community service. 
\begin{lem}\label{lem:C^1_cone}
There exists a constant $C_1 = C_1(\beta_*)>0$ such that the following holds. Suppose~$A,B\ge 0$. There exist numbers $\lambda<0$, $\nu>0$ and $\delta>0$ such that
\beqn
(f + \lambda x + \nu)h + \delta \in \cC_*
\eeqn
with
\beqn
\|(f + \lambda x + \nu)h + \delta\|_1 \le C_1AB
\eeqn
for every $f\in C^1([0,1])$ with $\|f\|_{C^1}\le A$ and every $h\in\cC_*$ with $m(h)\le B$.
In particular,
$
(\lambda x + \nu)h + \delta \in \cC_*
$
with
$
\|(\lambda x + \nu)h + \delta\|_1 \le C_1AB. 
$
\end{lem}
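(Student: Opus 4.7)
The plan is to pick $\lambda<0$ that makes $\phi(x) := f(x)+\lambda x+\nu$ non-increasing, $\nu>0$ that makes $\phi$ non-negative and uniformly bounded, and then to add a constant $\delta>0$ that repairs the remaining cone properties destroyed by the fact that $\phi$ decreases. Concretely I would take $\lambda=-A$ and $\nu=2A$ (for $A>0$; the case $A=0$ is a trivial perturbation): then $\phi' = f'+\lambda\le 0$, $\phi\ge f-Ax+2A\ge 0$, and $\phi(0)=f(0)+2A\le 3A$. With $g := \phi h + \delta$, the positivity, monotonicity, continuity and integrability required by $\cC_*$ are then immediate, and $\|g\|_1 \le 3A\,m(h)+\delta \le 3AB+\delta$.

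The delicate step is verifying that $x^{\beta_*+1}g$ is increasing on $(0,1]$. Let $\psi(x):=x^{\beta_*+1}h(x)$. From $\cC_*$ we know $\psi$ is non-negative and increasing, and property~(4) in the definition of $\cC_*$ upgrades this to $\psi(x)\le a\,m(h)\cdot x\le aBx$. For $0<y\le x\le 1$ I would split
\[
x^{\beta_*+1}g(x)-y^{\beta_*+1}g(y)=\phi(x)[\psi(x)-\psi(y)]+[\phi(x)-\phi(y)]\psi(y)+\delta[x^{\beta_*+1}-y^{\beta_*+1}].
\]
The first term is non-negative; the second is at least $-2aAB\cdot y(x-y)$ by $|\phi'|\le 2A$ and $\psi(y)\le aBy$; and for the third the elementary bound $x^{\beta_*+1}-y^{\beta_*+1}\ge(\beta_*+1)y^{\beta_*}(x-y)$ gives $(\beta_*+1)\delta y^{\beta_*}(x-y)$. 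Summing, the difference is at least $(x-y)y^{\beta_*}[(\beta_*+1)\delta-2aAB\cdot y^{1-\beta_*}]$; since $y^{1-\beta_*}\le 1$, this is non-negative provided $\delta\ge 2aAB/(\beta_*+1)$.

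For cone property~(4), $g(x)\le 3A\cdot ax^{-\beta_*}B+\delta$, so the required inequality $g(x)\le ax^{-\beta_*}m(g)$ reduces (via $x^{\beta_*}\le 1$ and $m(g)\ge\delta$) to $\delta(1-1/a)\ge 3AB$, i.e., $\delta\ge 3aAB/(a-1)$; this is meaningful because $a=2^{\beta_*}(\beta_*+2)>2$ for $\beta_*\in(0,1)$. Choosing $\delta$ to be the maximum of the two lower bounds makes $\delta$ a constant multiple of $AB$, and hence $\|g\|_1\le (3+c(\beta_*))\,AB=:C_1 AB$. The ``in particular'' clause follows by specialising to $f\equiv 0$.

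The main obstacle is the monotonicity of $x^{\beta_*+1}g$: because $h$ is merely continuous we cannot differentiate it, and the decreasing $\phi$ fights the increasing $\psi$. The resolution hinges on property~(4) for $h$, which forces $\psi(y)\le aBy$ so that the negative contribution from $\phi$ near $0$ is of order $y(x-y)$, a factor $y^{1-\beta_*}$ smaller than the positive contribution $y^{\beta_*}(x-y)$ arising from $\delta\,d(x^{\beta_*+1})$, allowing a single $\delta=O(AB)$ to close the estimate uniformly in $f$ and $h$.
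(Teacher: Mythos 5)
Your argument is correct and is essentially the paper's proof: the same choice $\lambda=-A$, the same three cone verifications, the same splitting of $x^{\beta_*+1}g(x)-y^{\beta_*+1}g(y)$ combined with the cone bound $x^{\beta_*+1}h(x)\le a\,m(h)\,x$ and the inequality $x^{\beta_*+1}-y^{\beta_*+1}\ge(\beta_*+1)y^{\beta_*}(x-y)$, leading to the same two lower bounds on $\delta$ of order $AB$. The only difference is cosmetic: you work directly with $\phi=f+\lambda x+\nu$, whereas the paper first rescales to a function $g=(f+\lambda x)/\nu+1$ close to $1$ before running the identical estimates.
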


\begin{remark}
The proof shows that there exist system constants $\lambda_1<0$, $\nu_1>0$ and~$\delta_1>0$ such that
\beqn
\lambda = A\lambda_1, \quad \nu = A\nu_1, \quad \text{and}\quad \delta = AB\delta_1.
\eeqn
While it is true that a similar result can be obtained with $\lambda = \|f\|_{C_1}\lambda_1$, $\nu = \|f\|_{C_1}\nu_1$ and~$\delta = \|f\|_{C_1} m(h)\delta_1$ without imposing bounds on $\|f\|_{C_1}$ and $m(h)$, there is some virtue in having the constants depend on the upper bound only. For example, in the current formulation the function $(\lambda x + \nu)h + \delta$ is automatically in the cone.
\end{remark}

\begin{proof}[Proof of Lemma~\ref{lem:C^1_cone}]
First note that fixing
\beqn
\lambda \le -\|f'\|_\infty
\eeqn
implies that $f+\lambda x$ is decreasing. The core idea of the proof is now that, for large $\nu>0$, the function
\beqn
\left(\frac{f + \lambda x}\nu + 1\right)\! h
\eeqn
is very close to $h\in\cC_*$, and thus belongs to the cone after a small modification. To implement this idea rigorously, we write
\beqn
(f + \lambda x + \nu)h + \delta = \nu\left(\left(\frac{f + \lambda x}\nu + 1\right)\! h + \frac\delta\nu\right) = \nu(gh + \delta'),
\eeqn
where 
\beq\label{eq:g_and_f}
g = \frac{f + \lambda x}\nu + 1 \quad\text{and}\quad \delta' = \frac\delta\nu.
\eeq
It now suffices to show that if $g\in C^1([0,1])$ is an arbitrary function such that $\|g-1\|_{C^1}$ is sufficiently small and $g'\le 0$, there exists $\delta'>0$ such that
\beqn
\psi = gh + \delta' \in \cC_*.
\eeqn

\noindent{\bf Step 1.} Suppose $g\ge 0$ and $g'\le 0$. Then $\psi\ge 0$ and~$\psi$ is decreasing.

\noindent{\bf Step 2.} We identify a condition which guarantees that $x^{\alpha+1}\psi$ is increasing. To that end, let $x<y$ and observe that
\beqn
\begin{split}
& y^{\alpha+1}\psi(y) - x^{\alpha+1}\psi(x) 
\\
& = g(y) y^{\alpha+1}h(y) - g(x) x^{\alpha+1}h(x)  + \delta'( y^{\alpha+1} -  x^{\alpha+1})
\\
& = g(y) [y^{\alpha+1}h(y)-x^{\alpha+1}h(x)] - (g(x)-g(y))x^{\alpha+1}h(x) + \delta'( y^{\alpha+1} -  x^{\alpha+1})
\\
& \ge -(g(x)-g(y))x^{\alpha+1}h(x) + \delta'( y^{\alpha+1} -  x^{\alpha+1}),
\end{split}
\eeqn
where we used that $x^{\alpha+1}h$ is increasing and $g\ge 0$. Since $g$ is a decreasing~$C^1$ function and $0\le h(x)\le ax^{-\alpha}m(h)$,
\beqn
-(g(x)-g(y))x^{\alpha+1}h(x) \ge -\|g'\|_\infty(y-x) ax\,m(h).
\eeqn
On the other hand,
\beqn
y^{\alpha+1} -  x^{\alpha+1} = (\alpha+1)\int_x^y\xi^\alpha\,dx \ge (\alpha+1)(y-x)x^\alpha \ge (\alpha+1)(y-x)x.
\eeqn
Thus, we arrive at
\beqn
\begin{split}
y^{\alpha+1}\psi(y) - x^{\alpha+1}\psi(x) 
 \ge [-\|g'\|_\infty a\,m(h) + \delta'(\alpha+1)](y-x)x,
\end{split}
\eeqn
which is $\ge 0$, provided that
\beqn
\delta' \ge \frac{a}{\alpha+1}m(h)\|g'\|_\infty.
\eeqn

\noindent{\bf Step 3.} We identify a condition which guarantees $\psi(x)\le ax^{-\alpha}m(\psi)$, the latter being equivalent to~$gh(x)+\delta' \le ax^{-\alpha}(m(gh)+\delta')$. Let us assume throughout that $\|g-1\|\le\frac13$. Note that
\beqn
1-\|g-1\|_\infty \le g \le 1+\|g-1\|_\infty.
\eeqn
Thus, using $h(x) \le ax^{-\alpha}m(h)$,
\beqn
gh(x) \le  (1+\|g-1\|_\infty) ax^{-\alpha}m(h),
\eeqn
which yields
\beqn
gh(x) + \delta' \le \frac{1+ \|g-1\|_\infty}{1- \|g-1\|_\infty}ax^{-\alpha}m(gh) + \delta' = \left(1+\frac{2\|g-1\|_\infty}{1- \|g-1\|_\infty}\right)ax^{-\alpha}m(gh) + \delta',
\eeqn
We now make the assumption
\beqn
\delta' \ge \frac{4a}{a-1} m(h)\|g-1\|_\infty.
\eeqn
It yields
\beqn
\begin{split}
& \frac{2\|g-1\|_\infty}{1- \|g-1\|_\infty}ax^{-\alpha}m(gh) + \delta'
\\
& \le 2\|g-1\|_\infty\frac{1+\|g-1\|_\infty}{1- \|g-1\|_\infty}ax^{-\alpha}m(h) + x^{-\alpha}\delta' 
\\
& \le \left(4\|g-1\|_\infty a\,m(h) + \delta'\right)x^{-\alpha} \le ax^{-\alpha}\delta',
\end{split}
\eeqn
which gives the desired bound~$gh(x)+\delta' \le ax^{-\alpha}(m(gh)+\delta')$.

\noindent{\bf Final step.} Let us revert to the function~$f$ and translate the above conditions to conditions on $\lambda$, $\nu$ and $\delta$ via~\eqref{eq:g_and_f}. First of all,
\beqn
\lambda \le -\|f'\|_\infty \quad\text{and}\quad \nu\ge \|f\|_\infty-\lambda 
\quad \Longrightarrow \quad
g\ge 0 \quad\text{and}\quad g'\le 0.
\eeqn
Secondly,
\beqn
\nu \ge 3(\|f\|_\infty - \lambda)
\quad \Longrightarrow \quad 
\|g-1\|_\infty \le \frac13.
\eeqn
Finally,
\beqn
\delta \ge \frac{a}{\alpha+1}m(h)(\|f'\|_\infty-\lambda) 
\quad \Longrightarrow \quad 
\delta' \ge \frac{a}{\alpha+1}m(h)\|g'\|_\infty
\eeqn
and
\beqn
\delta \ge \frac{4a}{a-1}m(h)(\|f\|_\infty - \lambda)
\quad \Longrightarrow \quad 
\delta' \ge \frac{4a}{a-1} m(h)\|g-1\|_\infty.
\eeqn
Choosing
\beqn
\lambda = -A, \quad \nu = 6A \quad \text{and} \quad \delta = 2AB\max\!\left(\frac{a}{\alpha+1},\frac{4a}{a-1}\right)
\eeqn
is enough to satisfy all the conditions. Since
\beqn
\begin{split}
\|(f + \lambda x + \nu)h + \delta\|_1 
& \le (\|f\|_\infty + |\lambda| + \nu)m(h) + \delta_1
\\
& \le (A + A + 6A)B + 2AB\max\!\left(\frac{a}{\alpha+1},\frac{4a}{a-1}\right),
\end{split}
\eeqn
the proof is complete.
\end{proof}

Let us immediately give an example of how Lemma~\ref{lem:C^1_cone} can be put to use. We will in fact need a more general result for our purposes, in order to prove certain correlation estimates later on. But proving the special case first already reveals the basic idea.

\begin{lem}\label{lem:recursive}
Let $(T_i)_{i\ge1}$ be admissible, $f_1,f_2\in C^1([0,1])$, $h\in\cC_*$ and $n\ge 0$.
There exist $g_1,\dots,g_4\in\cC_*$ such that
\beqn
f_2\cdot \widetilde\cL_n(f_1 h) = g_1 - g_2 + g_3 - g_4
\eeqn
and
\beqn
\|g_i\|_1 \le C_1^2\|f_1\|_{C^1}\|f_2\|_{C^1}m(h).
\eeqn
\end{lem}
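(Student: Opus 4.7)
My plan is a simple two-step application of Lemma~\ref{lem:C^1_cone}, interleaved with one application of the cone invariance (Lemma~\ref{lem:cone}), chosen so that the $L^1$ bounds compound to the stated $C_1^2$.

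First, I apply Lemma~\ref{lem:C^1_cone} to $f_1$ and $h$ with $A = \|f_1\|_{C^1}$ and $B = m(h)$. This yields constants $\lambda<0$, $\nu>0$, $\delta>0$ such that both $u_1 := (f_1 + \lambda x + \nu)h + \delta$ and $u_2 := (\lambda x + \nu)h + \delta$ lie in $\cC_*$, each with $L^1$ norm at most $C_1 \|f_1\|_{C^1}m(h)$. Subtracting, $f_1 h = u_1 - u_2$ is a decomposition as a difference of cone elements.

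Next, I push this through $\widetilde\cL_n$. By Lemma~\ref{lem:cone} (applied branch-by-branch, admissibility ensures $\alpha_i\le\beta_*$), each $v_i := \widetilde\cL_n u_i$ again lies in $\cC_*$. Since cone elements are non-negative and the transfer operator preserves the Lebesgue integral, $m(v_i)=m(u_i)=\|u_i\|_1\le C_1\|f_1\|_{C^1}m(h)$. Thus $\widetilde\cL_n(f_1 h)=v_1-v_2$ with $v_1,v_2\in\cC_*$ and controlled masses.

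Finally, I apply Lemma~\ref{lem:C^1_cone} a second time, now to $f_2$ and each $v_i$, with $A=\|f_2\|_{C^1}$ and $B=m(v_i)$. For each $i\in\{1,2\}$ this produces cone elements $w_i^{+},w_i^{-}\in\cC_*$ (playing the roles of $(f + \lambda x + \nu)h + \delta$ and $(\lambda x + \nu)h + \delta$) such that $f_2 v_i = w_i^{+} - w_i^{-}$ and
\[
\|w_i^{\pm}\|_1 \le C_1 \|f_2\|_{C^1} m(v_i) \le C_1^2\|f_1\|_{C^1}\|f_2\|_{C^1} m(h).
\]
Putting the pieces together,
\[
f_2\cdot \widetilde\cL_n(f_1 h)=f_2 v_1-f_2 v_2=w_1^{+}-w_1^{-}-w_2^{+}+w_2^{-},
\]
so the claim follows by setting $g_1=w_1^{+}$, $g_2=w_1^{-}$, $g_3=w_2^{-}$, $g_4=w_2^{+}$.

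There is no real obstacle: the only point requiring care is that one must keep track of the masses $m(u_i)$ and $m(v_i)$ so that the two invocations of Lemma~\ref{lem:C^1_cone} multiply to give the clean $C_1^2\|f_1\|_{C^1}\|f_2\|_{C^1}m(h)$ bound, which in turn relies on the identity $m(\widetilde\cL_n u)=m(u)$ for $u\ge 0$.
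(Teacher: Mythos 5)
Your proposal is correct and follows essentially the same route as the paper: one application of Lemma~\ref{lem:C^1_cone} to $f_1,h$, cone invariance of $\widetilde\cL_n$ with preservation of mass, and a second application of Lemma~\ref{lem:C^1_cone} to $f_2$ acting on the pushed-forward cone elements, giving the $C_1^2\|f_1\|_{C^1}\|f_2\|_{C^1}m(h)$ bound. The only cosmetic difference is that you take $B=m(v_i)$ in the second application while the paper uses the common upper bound $C_1\|f_1\|_{C^1}m(h)$, which changes nothing for this lemma.
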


\begin{proof}
Let us write
\beqn
\widetilde\cL_n(f_1h) = u_1 - u_2
\eeqn
with
\beqn
u_1 = \widetilde\cL_n[(f_1 + \lambda_1 x + \nu_1)h +\delta_1]
\quad\text{and}\quad u_2 = \widetilde\cL_n[(\lambda_1 x + \nu_1)h +\delta_1].
\eeqn
With $\lambda_1$, $\nu_1$ and $\delta_1$ as in Lemma~\ref{lem:C^1_cone}, where we set $A = \|f_1\|_{C^1}$ and $B = m(h)$, the functions in the square brackets are in $\cC_*$. Since~$\cC_*$ is preserved by $\widetilde\cL_n$, also $u_1,u_2\in\cC_*$. Moreover, there exists a system constant $C_1>0$ such that
\beqn
\begin{split}
m(u_i) \le C_1\|f_1\|_{C^1}m(h), \quad i=1,2.
\end{split}
\eeqn
We now apply Lemma~\ref{lem:C^1_cone} once more, with $A=\|f_2\|_{C^1}$ and $B = C_1\|f_1\|_{C^1}m(h)$: There exist constants $\lambda_2$, $\nu_2$ and $\delta_2$ such that
\beqn
\begin{split}
f_2 u_i = [(f_2 + \lambda_2 x + \nu_2)u_i +\delta_2] - [(\lambda_2 x + \nu_2)u_i +\delta_2],  \quad i=1,2,
\end{split}
\eeqn
where the functions in the square brackets are in~$\cC_*$ and the~$L^1$ norm of each is bounded by~$C_1^2\|f_2\|_{C^1}\|f_1\|_{C^1}m(h)$.
The proof is complete.
\end{proof}

The abovementioned necessary generalization of Lemma~\ref{lem:recursive} is the following: 

\begin{thm}\label{thm:recursive}
Let $(T_i)_{i\ge1}$ be admissible, $f_1,\dots,f_{k}\in C^1([0,1])$, $h\in\cC_*$ and $0\le n_1\le \dots\le n_k$.
There exist functions $g_i\in\cC_*$ and constants $\sigma_i\in\pm1$, $1\le i\le 2^k$, such that \footnote{In order to avoid cumbersome notation involving~$k-1$ pairs of parentheses, the convention here is that each operator acts on the entire expression to its right.}
\beqn
f_k\widetilde\cL_{n_{k-1},n_{k-2}+1}\cdots f_3\widetilde\cL_{n_2,n_1+1}f_2\widetilde\cL_{n_1,1}f_1 h = \sum_{i=1}^{2^k} \sigma_i g_i
\eeqn
with
\beqn
\|g_i\|_1 \le C_1^k\|f_1\|_{C^1}\cdots\|f_k\|_{C^1}m(h).
\eeqn
\end{thm}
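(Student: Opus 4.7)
The plan is to proceed by induction on $k$, using iteratively three facts: (a) the cone $\cC_*$ is invariant under every $\widetilde\cL_{n,m}$ by Lemma~\ref{lem:cone}; (b) the transfer operator preserves the $L^1$ norm of non-negative functions; and (c) multiplication by a $C^1$ function exits the cone, but by Lemma~\ref{lem:C^1_cone} one can return to $\cC_*$ via a signed decomposition, at the cost of an extra factor $C_1\|f\|_{C^1}$ and a doubling of the number of summands. The base case $k=1$ is immediate: Lemma~\ref{lem:C^1_cone} applied with $A=\|f_1\|_{C^1}$ and $B=m(h)$ gives $f_1 h = g_1 - g_2$ with $g_1,g_2\in\cC_*$ and $\|g_i\|_1\le C_1\|f_1\|_{C^1}m(h)$. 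The case $k=2$ is of course Lemma~\ref{lem:recursive} and already contains the template I will iterate.

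For the inductive step, assume the claim at level $k-1$, and peel off the two outermost operations from the level-$k$ expression, writing it as $f_k \cdot \widetilde\cL_{n_{k-1},n_{k-2}+1} E_{k-1}$, where $E_{k-1}$ denotes the level-$(k-1)$ expression built from $f_1,\dots,f_{k-1}$ and the corresponding $n_j$'s. By the inductive hypothesis,
\beqn
E_{k-1} = \sum_{i=1}^{2^{k-1}} \sigma_i^{(k-1)} g_i^{(k-1)}
\eeqn
with $g_i^{(k-1)} \in \cC_*$ and $\|g_i^{(k-1)}\|_1 \le C_1^{k-1}\|f_1\|_{C^1}\cdots\|f_{k-1}\|_{C^1} m(h)$. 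Linearity of the transfer operator, together with Lemma~\ref{lem:cone} and the $L^1$-norm conservation on non-negatives, shows that $u_i := \widetilde\cL_{n_{k-1},n_{k-2}+1} g_i^{(k-1)}$ lies in $\cC_*$ with $\|u_i\|_1 = \|g_i^{(k-1)}\|_1$.

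It remains to multiply by $f_k$ and re-enter the cone. I apply Lemma~\ref{lem:C^1_cone} to each product $f_k u_i$ with $A=\|f_k\|_{C^1}$ and $B=C_1^{k-1}\|f_1\|_{C^1}\cdots\|f_{k-1}\|_{C^1}m(h)$, obtaining $f_k u_i = p_i - q_i$ with $p_i,q_i\in\cC_*$ and $\max(\|p_i\|_1,\|q_i\|_1) \le C_1 AB = C_1^k\|f_1\|_{C^1}\cdots\|f_k\|_{C^1}m(h)$. Summing over $i$ yields a signed combination of $2\cdot 2^{k-1} = 2^k$ cone functions with the required $L^1$ bound, which closes the induction. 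There is no real obstacle; the only point meriting care is the bookkeeping of constants, specifically that the bound $B$ invoked at stage $k$ is \emph{exactly} the norm produced at stage $k-1$ — an identity that relies critically on the $L^1$-preserving property of the transfer operator on non-negative inputs.
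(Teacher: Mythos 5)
Your proof is correct and follows essentially the same route as the paper: induction on $k$, using the cone invariance of $\widetilde\cL_{n,m}$ and its $L^1$-preservation on nonnegative functions to carry the inductive bound, then re-entering $\cC_*$ after multiplication by $f_k$ via Lemma~\ref{lem:C^1_cone} with $A=\|f_k\|_{C^1}$ and $B$ equal to the inductively obtained norm bound. The only cosmetic difference is that you anchor the induction at $k=1$ rather than at $k=2$ (Lemma~\ref{lem:recursive}), which is equally valid.
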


\begin{proof}
The proof proceeds by induction.
By Lemma~\ref{lem:recursive}, the claim holds true for $k=2$. Suppose it holds true for some fixed $k\ge 2$ and note that
\beqn
\begin{split}
& f_{k+1}\widetilde\cL_{n_k,n_{k-1}+1}\cdots f_3\widetilde\cL_{n_2,n_1+1}f_2\widetilde\cL_{n_1,1}f_1 h 
 = \sum_{i=1}^{2^k} \sigma_i f_{k+1}\cdot\widetilde\cL_{n_k,n_{k-1}+1}(g_i),
\end{split}
\eeqn
where
$
\|\widetilde\cL_{n_k,n_{k-1}+1}g_i\|_1 = \|g_i\|_1 \le C_1^k\|f_1\|_{C^1}\cdots\|f_k\|_{C^1}m(h)
$
holds. Mimicking the proof of Lemma~\ref{lem:recursive}, we apply Lemma~\ref{lem:C^1_cone} with $A=\|f_{k+1}\|_{C^1}$ and $B = C_1^k\|f_1\|_{C^1}\cdots\|f_k\|_{C^1}m(h)$: There exist constants $\lambda$, $\nu$ and $\delta$ such that
\beqn
\begin{split}
f_{k+1}\cdot \widetilde\cL_{n_k,n_{k-1}+1}g_i = [(f_{k+1} + \lambda x + \nu)\widetilde\cL_{n_k,n_{k-1}+1}g_i +\delta] - [(\lambda x + \nu)\widetilde\cL_{n_k,n_{k-1}+1}g_i +\delta]
\end{split}
\eeqn
for $1\le i\le 2^k$, where the two functions in the square brackets are in $\cC_*$ and the $L^1$ norm of each is bounded by~$C_1\|f_{k+1}\|_{C^1} C_1^k\|f_1\|_{C^1}\cdots\|f_k\|_{C^1}m(h)$. This finishes the proof.
\end{proof}


\section{Correlation decay}\label{sec:decorrelation}

Given Theorem~\ref{thm:recursive}, we are now prepared to prove the following result on correlation decay. In addition to being necessary for our immediate needs, it will be applicable to proving limit theorems beyond this paper, such as a central limit theorem.
\begin{thm}\label{thm:multi}
Let $(T_i)_{i\ge 1}$ be admissible, $f_0,f_1,\dots,f_m\in C^1([0,1])$ and $f_{m+1},f_{m+2}\dots,f_k\in L^\infty$. Moreover, let $0\le n_1\le \cdots\le n_k$. Denote
\beqn
F_m =  f_m\circ \widetilde T_{n_m}\cdots f_1\circ \widetilde T_{n_1}\cdot f_0
\eeqn
and
\beqn
G_m = f_k\circ \widetilde T_{n_k}\cdots f_{m+1}\circ \widetilde T_{n_{m+1}}.
\eeqn
Then
\beqn
\begin{split}
& \left|\int G_m F_m\,d\mu 
- \int G_m\,d\mu 
\int F_m\,d\mu\right| 
\\
& 
\le 4C_0 (2C_1)^{m+1}\!\left(\prod_{i = m+1}^k\|f_i\|_\infty\right)\! \left(\prod_{i=0}^m\|f_i\|_{C^1}\right) \! \rho(n_{m+1}-n_m)
\end{split}
\eeqn
for any probability measure $d\mu = h\, dm$ with $h\in\cC_*$.
\end{thm}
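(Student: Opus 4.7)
The plan is to express both $\int G_m F_m \, d\mu$ and $\int F_m \, d\mu \int G_m \, d\mu$ as integrals of the form $\int H \cdot \widetilde{\cL}_{n_{m+1}, n_m+1}(\phi) \, dm$, with $\phi$ a difference of two functions in $\cC_*$ sharing the same Lebesgue integral. Lemma~\ref{lem:Aimino} will then deliver the decay rate $\rho(n_{m+1} - n_m)$. The key observation is that $G_m$ depends on the orbit only from time $n_{m+1}$ onward: using $\widetilde{T}_{n_j} = \widetilde{T}_{n_j, n_{m+1}+1} \circ \widetilde{T}_{n_{m+1}}$ for $j \ge m+1$, I would write $G_m = H \circ \widetilde{T}_{n_{m+1}}$, where $H(y) = \prod_{j=m+1}^{k} f_j\bigl(\widetilde{T}_{n_j, n_{m+1}+1}(y)\bigr)$ satisfies $\|H\|_\infty \le \prod_{j=m+1}^k \|f_j\|_\infty$. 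Transfer operator duality then yields $\int G_m F_m \, d\mu = \int H \cdot \widetilde{\cL}_{n_{m+1}}(F_m h) \, dm$ and $\int G_m \, d\mu = \int H \cdot \widetilde{\cL}_{n_{m+1}} h \, dm$.

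To process $\widetilde{\cL}_{n_m}(F_m h)$, I would iterate the identity $\cL(\phi \circ T \cdot g) = \phi \cdot \cL(g)$, peeling off the factors $f_j \circ \widetilde{T}_{n_j}$ one at a time. The outcome is precisely the expression handled by Theorem~\ref{thm:recursive} (with $k$ there replaced by $m+1$ and with our $f_0, \dots, f_m$ in the role of $f_1, \dots, f_k$), so it equals a signed sum $\sum_{i=1}^{2^{m+1}} \sigma_i g_i$ with $g_i \in \cC_*$ and $\|g_i\|_1 \le C_1^{m+1} \|f_0\|_{C^1} \cdots \|f_m\|_{C^1}$. Integrating gives $\int F_m \, d\mu = \sum_i \sigma_i m(g_i)$, and using $\widetilde{\cL}_{n_{m+1}} h = \widetilde{\cL}_{n_{m+1}, n_m+1} \widetilde{\cL}_{n_m} h$ I can rewrite
\beqn
\int F_m \, d\mu \int G_m \, d\mu = \sum_i \sigma_i \int H \cdot \widetilde{\cL}_{n_{m+1}, n_m+1}\bigl( m(g_i) \widetilde{\cL}_{n_m} h \bigr) \, dm.
\eeqn
Subtracting this from $\int G_m F_m \, d\mu = \sum_i \sigma_i \int H \cdot \widetilde{\cL}_{n_{m+1}, n_m+1}(g_i) \, dm$ yields
\beqn
\int G_m F_m \, d\mu - \int G_m \, d\mu \int F_m \, d\mu = \sum_{i=1}^{2^{m+1}} \sigma_i \int H \cdot \widetilde{\cL}_{n_{m+1}, n_m+1}\bigl( g_i - m(g_i) \widetilde{\cL}_{n_m} h \bigr) \, dm.
\eeqn

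Both $g_i$ and $m(g_i) \widetilde{\cL}_{n_m} h$ lie in $\cC_*$ (the latter because the cone is invariant under admissible transfer operators and closed under nonnegative scaling) and they share the common integral $m(g_i)$, so Lemma~\ref{lem:Aimino}, applied to the $n_{m+1} - n_m$ maps comprising $\widetilde{\cL}_{n_{m+1}, n_m+1}$, bounds the $L^1$ norm of each summand by $2 C_0 m(g_i) \rho(n_{m+1} - n_m)$. Pulling $\|H\|_\infty$ outside and summing over the $2^{m+1}$ terms produces the claimed bound, with a bit of slack in the numerical constant. I foresee no deep obstacle; the step needing the most care is purely bookkeeping, namely tracking how the successive uses of $\cL(\phi \circ T \cdot g) = \phi \cdot \cL(g)$ reshape $\widetilde{\cL}_{n_m}(F_m h)$ into the form demanded by Theorem~\ref{thm:recursive}, including the alignment of the index ranges $[n_{j-1}+1, n_j]$.
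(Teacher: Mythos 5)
Your argument is correct and follows essentially the same route as the paper's proof: pass to transfer operators by duality, decompose $\widetilde\cL_{n_m}(F_m h)$ via Theorem~\ref{thm:recursive}, and apply Lemma~\ref{lem:Aimino} to each difference $g_i - m(g_i)\,\widetilde\cL_{n_m}h$ of cone functions with equal integrals. The only (harmless) deviation is organizational: the paper centers $G_m$ by its mean and picks up an extra factor $2$ (hence $4C_0$), whereas you distribute $\int F_m\,d\mu\int G_m\,d\mu$ over the $\sigma_i g_i$ directly, landing within the stated constant.
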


\begin{proof}
First note that, writing
\beqn
\widetilde G_m =  f_k\circ \widetilde T_{n_k,n_{m+1}+1}\cdot f_{k-1}\circ \widetilde T_{n_{k-1},n_{m+1}+1}\cdots f_{m+1},
\eeqn
we have $G_m = \widetilde G_m\circ \widetilde T_{n_{m+1}}$ and, for any constant~$c$,
\beqn
\begin{split}
& \left|\int G_m F_m\,d\mu - \int G_m\,d\mu \int F_m\,d\mu\right| = \left|\int \left(G_m-\int G_m\,d\mu\right) (F_m - c) \,d\mu\right|
\\
& = \left|\int \left(\widetilde G_m-\int G_m\,d\mu\right)\circ \widetilde T_{n_{m+1}} (F_m - c)\,d\mu\right| 
\\
& = \left|\int \left(\widetilde G_m-\int G_m\,d\mu\right)\widetilde\cL_{n_{m+1}} (F_m  h - c  h)\,dx\right|
\\
& \le 2\|\widetilde G_m\|_\infty \|\widetilde\cL_{n_{m+1}} (F_m  h - c h)\|_1 
\\
& \le 2\!\left(\prod_{i = m+1}^k\|f_i\|_\infty\right)\! \|\widetilde\cL_{n_{m+1},n_m+1}(\widetilde\cL_{n_m} (F_m  h) - c \tilde h)\|_1
\end{split}
\eeqn
where we have introduced $\tilde h = \widetilde\cL_{n_m}h\in\cC_*$. 
Choosing
\beqn
c = \int \widetilde\cL_{n_m}(F_m  h)\,dx = \int F_m h\,dx
\eeqn
guarantees
\beqn
\int (\widetilde\cL_{n_m} (F_m  h) - c  \tilde h)\, dx = 0.
\eeqn
Using the basic identity $\widetilde\cL_n(f\circ \widetilde T_n\, g) = f \widetilde\cL_n g$, note that
\beqn
\widetilde\cL_{n_i}(F_i  h) = \widetilde\cL_{n_i}(f_i\circ \widetilde T_{n_i} F_{i-1}  h) = f_i\widetilde\cL_{n_i,n_{i-1}+1}\widetilde\cL_{n_{i-1}}(F_{i-1} h).
\eeqn
By induction,
\beqn
\begin{split}
\widetilde\cL_{n_m} (F_m  h) 
& = f_m\widetilde\cL_{n_m,n_{m-1}+1}\cdots f_2\widetilde\cL_{n_2,n_1+1}\widetilde\cL_{n_1}(F_1  h)
\\
& = f_m\widetilde\cL_{n_m,n_{m-1}+1}\cdots f_2\widetilde\cL_{n_2,n_1+1} f_1\widetilde\cL_{n_1} f_0  h.
\end{split}
\eeqn
We can now apply Theorem~\ref{thm:recursive}, according to which
\beqn
\widetilde\cL_{n_m}(F_m  h) =  \sum_{i=1}^{2^{m+1}} \sigma_i g_i
\eeqn
for suitable functions $g_i\in\cC_*$ and constants $\sigma_i\in\pm1$, $1\le i\le 2^{m+1}$, satisfying
\beqn
\|g_i\|_1 \le C_1^{m+1}\|f_0\|_{C^1}\cdots\|f_m\|_{C^1}.
\eeqn
Writing $c_i = \int g_i\,dx$,
\beqn
\begin{split}
\widetilde\cL_{n_m} (F_m   h) - c  \tilde h = \sum_{i=1}^{2^{m+1}} \sigma_i (g_i - c_i  \tilde h).
\end{split}
\eeqn
Each function in the parentheses on the right side satisfies the assumptions of Lemma~\ref{lem:Aimino}. This yields
\beqn
\begin{split}
\|\widetilde\cL_{n_{m+1},n_m+1} (\widetilde\cL_{n_m} (F_m  h) - c  \tilde h)\|_1 & \le \sum_{i=1}^{2^{m+1}}\|\widetilde\cL_{n_{m+1},n_m+1}(g_i - c_i  \tilde h)\|_1 
\\
& \le C_0\sum_{i=1}^{2^{m+1}}(\|g_i\|_1 + |c_i|)\rho(d_m)
\\
& \le 2C_0\sum_{i=1}^{2^{m+1}}\|g_i\|_1\rho(d_m) 
\\
& \le 2C_0 2^{m+1} C_1^{m+1}\|f_0\|_{C^1}\cdots\|f_m\|_{C^1} \rho(d_m).
\end{split}
\eeqn
Collecting the estimates proves the theorem.
\end{proof}



\medskip
\section{Perturbation of transfer operator and SRB density}\label{sec:perturbation}
Here we prove that the SRB density $\hat h_\alpha$ and the pushforward $\cL_\alpha h$ of any initial density~$h\in\cC_*$ depend H\"older continuously on the parameter~$\alpha$ in the $L^1$ norm. To the authors' knowledge these are new results.

\begin{thm}\label{thm:SRB_continuous}
Let $0<\beta_*<1$. There exists a constant $C_2 = C_2(\beta_*)>0$ such that 
\beq\label{eq:transfer_perturbation}
\|(\cL_\alpha - \cL_\beta)h\|_1 \le C_2\|h\|_1(\beta-\alpha)^{\frac13(1- \beta_*)}|{\log(\beta-\alpha)}| \qquad \forall\, h\in\cC_*
\eeq
and
\beq\label{eq:SRB_perturbation}
\|\hat h_\alpha-\hat h_\beta\|_1 \le C_2(\beta-\alpha)^{\frac13(1- \beta_*)^2} |{\log (\beta-\alpha)}|^\frac1{\beta_*}
\eeq
hold whenever $0\le\alpha<\beta\le \beta_*$.
\end{thm}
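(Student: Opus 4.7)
The plan is to prove~\eqref{eq:transfer_perturbation} directly and then deduce~\eqref{eq:SRB_perturbation} from it via a classical telescoping argument in iterates of the transfer operators.

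For~\eqref{eq:transfer_perturbation}, since the right branches of $T_\alpha$ and $T_\beta$ coincide,
\[
(\cL_\alpha-\cL_\beta)h(x) = \frac{h(y_\alpha(x))}{T_\alpha'(y_\alpha(x))} - \frac{h(y_\beta(x))}{T_\beta'(y_\beta(x))},
\]
where $y_\gamma(x)\in[0,1/2)$ denotes the left-branch preimage. Changing variables via $u=y_\alpha$, $x=T_\alpha(u)$, and introducing $\psi(u):=y_\beta(T_\alpha(u))$ (which satisfies $T_\beta(\psi(u))=T_\alpha(u)$ and hence $\psi'(u)=T_\alpha'(u)/T_\beta'(\psi(u))$) recasts the $L^1$-norm as
\[
\int_0^{1/2}|h(u) - h(\psi(u))\psi'(u)|\,du.
\]
I would split this integral at a cutoff $\delta\in(0,1/2)$ to be optimized at the end. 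On $[0,\delta]$, since $\cL_\alpha h,\cL_\beta h\in\cC_*$ by Lemma~\ref{lem:cone} and have $L^1$-mass $\|h\|_1$, both are pointwise $\le ax^{-\beta_*}\|h\|_1$, yielding a contribution of order $\delta^{1-\beta_*}\|h\|_1$ independent of $\beta-\alpha$.

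On $[\delta,1/2]$, perturbative estimates take over. From the explicit form $T_\gamma(u)=u+2^\gamma u^{\gamma+1}$, the implicit relation $T_\beta(\psi(u))=T_\alpha(u)$, and the mean value theorem in $\gamma$, one extracts quantitative bounds $|\psi(u)-u|\lesssim (\beta-\alpha)u^{\alpha+1}|\log u|$ and $|\psi'(u)-1|\lesssim (\beta-\alpha)u^\alpha|\log u|$. The non-smooth factor $h$ is controlled by extracting a modulus of continuity from the cone $\cC_*$ itself: combining the two monotonicities ($h$ decreasing and $u^{\beta_*+1}h$ increasing) gives $|h(u)-h(\psi(u))|\lesssim (|\psi(u)-u|/u)\cdot u^{-\beta_*}\|h\|_1$ whenever both arguments lie in $[\delta/2,1/2]$. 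Inserting these estimates together with $h(\psi(u))\le au^{-\beta_*}\|h\|_1$, and integrating the resulting singular weights, produces an upper bound for the $[\delta,1/2]$-contribution of the form $C(\beta-\alpha)\delta^{-s}(1+|\log\delta|)\|h\|_1$ for an explicit $s\ge0$ emerging from the calculation. Optimizing $\delta$ to balance the two contributions yields the bound in \eqref{eq:transfer_perturbation}, with the exponent $\tfrac13(1-\beta_*)$ reflecting the trade-off between the near-origin singularity of $h$ and the perturbative gain.

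For~\eqref{eq:SRB_perturbation}, the invariance identities $\cL_\alpha\hat h_\alpha=\hat h_\alpha$ and $\cL_\beta\hat h_\beta=\hat h_\beta$ give, for any $n\ge 1$,
\[
\hat h_\alpha-\hat h_\beta = (\cL_\alpha^n-\cL_\beta^n)\hat h_\alpha + \cL_\beta^n(\hat h_\alpha-\hat h_\beta).
\]
Since $\hat h_\alpha,\hat h_\beta\in\cC_*$ are probability densities, Lemma~\ref{lem:Aimino} bounds the second term in $L^1$ by $2C_0\rho(n)$. The first term telescopes, using $\cL_\alpha^k\hat h_\alpha=\hat h_\alpha$, as $\sum_{k=0}^{n-1}\cL_\beta^{n-1-k}(\cL_\alpha-\cL_\beta)\hat h_\alpha$; applying \eqref{eq:transfer_perturbation} summand-by-summand and using that $\cL_\beta$ is an $L^1$ contraction yields $\lesssim n(\beta-\alpha)^{(1-\beta_*)/3}|\log(\beta-\alpha)|$. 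Optimizing $n\sim (\beta-\alpha)^{-\beta_*(1-\beta_*)/3}$ against $\rho(n)\sim n^{-(1/\beta_*-1)}(\log n)^{1/\beta_*}$ produces \eqref{eq:SRB_perturbation}, the logarithmic correction inherited from $\rho(n)$. The principal obstacle is \eqref{eq:transfer_perturbation}: the away-from-origin estimate is delicate because $h$ lacks any derivative regularity and the modulus of continuity must be extracted purely from the cone, and assembling the perturbative bounds to land on the exponent $\tfrac13(1-\beta_*)$ requires careful tracking of the singular weights and logarithmic factors.
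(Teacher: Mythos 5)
Your proposal is correct in substance, and on the second claim it coincides with the paper: both use the telescoping identity for $\cL_\alpha^n-\cL_\beta^n$ together with the memory-loss bound of Lemma~\ref{lem:Aimino} (the paper compares each density to the constant density $1$ rather than to each other, which is immaterial), the $L^1$-contraction property, and the same choice $n\sim(\beta-\alpha)^{-\beta_*(1-\beta_*)/3}$. On the transfer-operator estimate your route differs in a genuine way. The paper works at fixed $x$, splits at $\ve=(\beta-\alpha)^{1/3}$, bounds $|y_\alpha-y_\beta|$ only uniformly by $(\beta-\alpha)\log\ve^{-1}$, and controls $|h(y_\alpha)-h(y_\beta)|$ via the Lipschitz continuity of $x^{1+\beta_*}h(x)$, which costs a factor $\ve^{-2-\beta}$ and forces the exponent $\tfrac13(1-\beta_*)$. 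You instead change variables to $u=y_\alpha$, compare via the conjugating map $\psi$ with the \emph{relative} bounds $|\psi(u)-u|\lesssim(\beta-\alpha)u^{1+\alpha}|\log u|$, $|\psi'(u)-1|\lesssim(\beta-\alpha)u^\alpha|\log u|$, and extract a relative modulus of continuity $|h(u)-h(\psi(u))|\lesssim\frac{\psi(u)-u}{u}\,u^{-\beta_*}\|h\|_1$ directly from the two cone monotonicities (using $\psi(u)\le 2u$). These bounds are all correct, and they are in fact sharper than the paper's: inserted into your integral they give an integrand $\lesssim(\beta-\alpha)u^{-\beta_*}|\log u|\,\|h\|_1$, which is integrable on $(0,\tfrac12]$, so the outer contribution is $O((\beta-\alpha)\|h\|_1)$ \emph{uniformly in the cutoff} — your assertion that the balance "lands on the exponent $\tfrac13(1-\beta_*)$" is therefore loose (you never compute the exponent $s$, and with your own estimates $s=0$, so no optimization is needed and you would obtain a Lipschitz-type bound $C(\beta_*)(\beta-\alpha)\|h\|_1$, which is stronger than \eqref{eq:transfer_perturbation} and in particular implies it, and would even slightly improve the exponent in \eqref{eq:SRB_perturbation}). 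The only criticism is that this final integration/optimization step is asserted rather than carried out; the ingredients you state do suffice, so there is no gap, only an unexploited gain relative to the paper's cruder uniform-in-$x$ estimates.
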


We do not claim the result to be optimal regarding the exponents.

From Theorem~\ref{thm:SRB_continuous} we immediately get
\begin{cor}\label{cor:SRB_mean_continuous}
Let $0<\beta_*<1$ and let $C_2>0$ be as in Theorem~\ref{thm:SRB_continuous}. For every bounded measurable function $f:[0,1]\to\bR$,
\beqn
\left|\int f\,d\hat\mu_\alpha - \int f\,d\hat\mu_\beta\right| \le C_2\|f\|_\infty|(\beta-\alpha)^{\frac13(1- \beta_*)^2} |{\log (\beta-\alpha)}|^\frac1{\beta_*}
\eeqn
holds whenever $0\le\alpha< \beta\le \beta_*$.
\end{cor}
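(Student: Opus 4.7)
The plan is to reduce the corollary to a direct application of Theorem~\ref{thm:SRB_continuous} via the $L^1$--$L^\infty$ duality. Since $\hat\mu_\alpha$ and $\hat\mu_\beta$ are both absolutely continuous with respect to the Lebesgue measure with densities $\hat h_\alpha$ and $\hat h_\beta$, I would first write
\beqn
\int f\,d\hat\mu_\alpha - \int f\,d\hat\mu_\beta = \int f\,(\hat h_\alpha - \hat h_\beta)\,dm.
\eeqn
This identity is immediate from the definition of the SRB measures and does not require any of the dynamical machinery developed in the previous sections.

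Next, I would apply the trivial bound
\beqn
\left|\int f\,(\hat h_\alpha - \hat h_\beta)\,dm\right| \le \|f\|_\infty \|\hat h_\alpha - \hat h_\beta\|_1,
\eeqn
which is valid for every bounded measurable~$f$. At this point the remaining task is to bound $\|\hat h_\alpha - \hat h_\beta\|_1$, which is precisely the content of inequality~\eqref{eq:SRB_perturbation} in Theorem~\ref{thm:SRB_continuous}. Substituting this bound directly yields the stated inequality with the same constant~$C_2$ as in Theorem~\ref{thm:SRB_continuous}.

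There is no genuine obstacle here: the corollary is a one-line consequence of Theorem~\ref{thm:SRB_continuous} together with the fact that total variation distance between absolutely continuous probability measures coincides (up to a constant) with the $L^1$ distance between their densities. The only point worth mentioning is that $f$ is merely required to be bounded and measurable, not continuous, because the estimate $|\int f\,g\,dm| \le \|f\|_\infty\|g\|_1$ holds in this generality, so no approximation argument is needed.
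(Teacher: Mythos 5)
Your proof is correct and coincides with the paper's intended argument: the paper presents the corollary as an immediate consequence of Theorem~\ref{thm:SRB_continuous}, precisely via writing the difference of integrals as $\int f(\hat h_\alpha-\hat h_\beta)\,dm$ and bounding it by $\|f\|_\infty\|\hat h_\alpha-\hat h_\beta\|_1$ together with \eqref{eq:SRB_perturbation}. No further comment is needed.
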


\begin{proof}[Proof of Theorem~\ref{thm:SRB_continuous}]
The proof is given in the same order as the claims in the theorem.

\smallskip
\noindent{\bf Part 1: perturbation of the transfer operator.}
Let us first assume $\|h\|_1 = 1$.
Since the maps $T_\alpha$ and $T_\beta$ agree on $[\frac12,1]$,
\beqn
(\cL_\alpha - \cL_\beta) h(x) = \frac{h(y_\alpha)}{T_\alpha'(y_\alpha)} - \frac{h(y_\beta)}{T_\beta'(y_\beta)},
\eeqn
where $y_\alpha$ is the preimage of $x$ under the left branch $T_\alpha|_{[0,\frac12)}$ and $y_\beta$ is defined similarly.
We begin by fixing $\ve\in(0,1)$ and noting that
\beqn
\int_{[0,\ve]}\frac{h(y_\alpha)}{T_\alpha'(y_\alpha)}\,dx = \int_{\{y:T_\alpha(y)\in[0,\ve]\}} h(y)\,dy \le \int_{[0,\ve]} h(y)\,dy \le C\ve^{1-\beta_*},
\eeqn
where the last inequality uses the fact that \(h \in \cC_*\), and the constant \(C\) depends only on~\(\beta_*\). A similar bound holds for $y_\beta$ in place of $y_\alpha$ and $T_\beta$ in place of $T_\alpha$. Hence,
\beqn
\int_{[0,\ve]}\left|\frac{h(y_\alpha)}{T_\alpha'(y_\alpha)} - \frac{h(y_\beta)}{T_\beta'(y_\beta)}\right|dx 
 \le 2C\ve^{1-\beta}.
\eeqn
We are left with an $L^1$ estimate on $(\ve,1]$. To that end, we bound
\beqn
\begin{split}
\left|\frac{h(y_\alpha)}{T_\alpha'(y_\alpha)} - \frac{h(y_\beta)}{T_\beta'(y_\beta)}\right|
& = \left|\frac{h(y_\alpha)-h(y_\beta)}{T_\alpha'(y_\alpha)} - \frac{h(y_\beta)}{T_\beta'(y_\beta)} \frac{1}{T_\alpha'(y_\alpha)}\left(T_\alpha'(y_\alpha) - T_\beta'(y_\beta) \right)\right|
\\
& \le |h(y_\alpha)-h(y_\beta)| + \frac{h(y_\beta)}{T_\beta'(y_\beta)} |T_\alpha'(y_\alpha) - T_\beta'(y_\beta)|.
\end{split}
\eeqn
Observe that 
\beqn
T_\alpha'(y_\alpha) = (1+\alpha)xy_\alpha^{-1} - \alpha,
\eeqn
where
\beqn
1\le xy_\alpha^{-1}\le 2,
\eeqn
because $x = T_\alpha(y_\alpha) = y_\alpha(1+2^{\alpha}y_{\alpha}^{\alpha})$.
Using this together with $\|\cL_\beta h\|_1 = 1$, we estimate
\beqn
\begin{split}
& \int_{(\ve,1]} \frac{h(y_\beta)}{T_\beta'(y_\beta)} |T_\alpha'(y_\alpha) - T_\beta'(y_\beta)|\,dx 
\\
& \le \|\cL_\beta h\|_1\|((1+\alpha)xy_\alpha^{-1} - \alpha - (1+\beta)xy_\beta^{-1} + \beta)1_{(\ve,1]}(x)\|_\infty 
\\
& \le (\beta-\alpha) + \|((1+\alpha)xy_\alpha^{-1} - (1+\beta)xy_\beta^{-1})1_{(\ve,1]}(x)\|_\infty 
\\
& = (\beta-\alpha) + \|((1+\alpha)(xy_\alpha^{-1} - xy_\beta^{-1}) - (\beta-\alpha)xy_\beta^{-1})1_{(\ve,1]}(x)\|_\infty 
\\
& \le 3(\beta-\alpha) + 2\|(xy_\alpha^{-1} - xy_\beta^{-1})1_{(\ve,1]}(x)\|_\infty 
\\
& \le 3(\beta-\alpha) + 2\|xy_\alpha^{-1}xy_\beta^{-1}x^{-1}(y_\alpha - y_\beta)1_{(\ve,1]}(x)\|_\infty 
\\
& \le 3(\beta-\alpha) + 8\|x^{-1}(y_\alpha - y_\beta)1_{(\ve,1]}(x)\|_\infty 
\\
& \le 3(\beta-\alpha) + 8\ve^{-1}\|(y_\alpha - y_\beta) 1_{(\ve,1]}(x)\|_\infty.
\end{split}
\eeqn
Note that, because $T_\beta$ is expanding,
\beqn
|y_\beta-y_\alpha| \le |T_\beta(y_\beta) - T_\beta(y_\alpha)| = |T_\alpha(y_\alpha) - T_\beta(y_\alpha)|.
\eeqn 
On the other hand, $x\ge \ve$ implies $y_\alpha,y_\beta\ge \frac{\ve}{2}$, so
\beqn
\begin{split}
|T_\alpha(y_\alpha)-T_\beta(y_\alpha)| 
& = \bigl | 2^{\alpha}y_{\alpha}^{\alpha+1} - 2^{\beta}y_{\alpha}^{\beta+1}\bigr| 
=\tfrac12 \bigl| (2y_{\alpha})^{1+ \alpha} - (2y_{\alpha})^{1+ \beta}\bigr| \\
& \le \bigl|\log(2 y_{\alpha}) \bigr| |\beta - \alpha| \le  \log (\ve^{-1})(\beta - \alpha)
\end{split}
\eeqn
The function $u(x) = x^{1+\beta_*}h(x)$ is Lipschitz continuous with a Lipschitz constant $C$ depending only on \(\beta_*\).\footnote{See the footnote in the proof of Lemma~2.3 in~\cite{LiveraniSaussolVaienti_1999}.} For $x\ge \ve$ (and $y_\alpha,y_\beta\ge \frac{\ve}{2}$) this implies
\beqn
\begin{split}
& |h(y_\alpha) - h(y_\beta)|
\\
& = |y_\alpha^{-1-\beta_*}u(y_\alpha) - y_\beta^{-1-\beta_*}u(y_\beta)|
\\
& \le |(y_\alpha^{-1-\beta*} - y_\beta^{-1-\beta*}) u(y_\alpha)| + |y_\beta^{-1-\beta}(u(y_\beta) - u(y_\alpha))|
\\
& \le (C\ve^{-2-\beta}+C\ve^{-1-\beta})|y_\beta-y_\alpha| 
\\
& \le C\ve^{-2-\beta}|y_\beta-y_\alpha|,
\end{split}
\eeqn 
where $C>0$ still depends only on $\beta_*$. We now have (with new constants)
\beqn
\begin{split}
\|(\cL_\alpha - \cL_\beta) h\|_1 
& \le C(\ve^{1-\beta} + (\beta-\alpha)\ve^{-2-\beta}\log\ve^{-1})
\le C(\ve^{1-\beta} + (\beta-\alpha)\ve^{-2-\beta})\log\ve^{-1},
\end{split}
\eeqn
and setting $\ve = (\beta-\alpha)^{\frac13}$ yields
\beqn
\|(\cL_\alpha - \cL_\beta) h\|_1  \le C(\beta-\alpha)^{\frac13(1- \beta)}|{\log(\beta-\alpha)}| \le C(\beta-\alpha)^{\frac13(1- \beta_*)}|{\log(\beta-\alpha)}| .
\eeqn
The case $\|h\|_1\ne 1$ is recovered by scaling, which finishes the proof of the first part.

\smallskip
\noindent{\bf Part 2: perturbation of the SRB density.}
Since $\cL_\alpha\hat h_\alpha = \hat h_\alpha$ for each $\alpha$, we have, for any $n\ge 0$,
\beqn
\hat h_\alpha-\hat h_\beta = \cL_\alpha^n (\hat h_\alpha-1) - \cL_\alpha^n (\hat h_\beta-1) + (\cL_\alpha^n - \cL_\beta^n) \hat h_\beta.
\eeqn
Here $\hat h_\alpha$, $\hat h_\beta$ and $1$ are in $\cC_*$ with $\int \hat h_\alpha\,dx = \int \hat h_\beta\,dx = 1$, so Lemma~\ref{lem:Aimino} yields
\beqn
\begin{split}
\|\cL_\alpha^n (\hat h_\alpha-1) - \cL_\alpha^n (\hat h_\beta-1)\|_1 
& \le 4C_0n^{-(\frac1{\beta_*}-1)}(\log n)^\frac1{\beta_*},
\end{split}
\eeqn
where the constant depends only on $\beta_*$.
Since
\beqn
(\cL_\alpha^n - \cL_\beta^n) \hat h_\beta = \sum_{k=1}^n \cL_\alpha^{k-1}(\cL_\alpha - \cL_\beta)\cL_\beta^{n-k} \hat h_\beta = \sum_{k=1}^n \cL_\alpha^{k-1}(\cL_\alpha - \cL_\beta) \hat h_\beta
\eeqn
and $\cL_\alpha$ is an $L^1$ contraction, we also have
\beqn
\|(\cL_\alpha^n - \cL_\beta^n) \hat h_\beta\|_1 \le n\|(\cL_\alpha - \cL_\beta) \hat h_\beta\|_1.
\eeqn

Applying the first part of the theorem and collecting all the estimates, we arrive at
\beqn
\begin{split}
\|\hat h_\alpha-\hat h_\beta\|_1 
& \le Cn(n^{-\frac1{\beta_*}}(\log n)^\frac1{\beta_*} + (\beta-\alpha)^{\frac13(1- \beta_*)}|{\log(\beta-\alpha)}|)
\\
& \le Cn(n^{-\frac1{\beta_*}} + (\beta-\alpha)^{\frac13(1- \beta_*)})\max((\log n)^\frac1{\beta_*},|{\log(\beta-\alpha)}|),
\end{split}
\eeqn
where $C$ depends on $\beta_*$ only.
Choosing $n = \lfloor(\beta-\alpha)^{-\frac13\beta_*(1- \beta_*)}\rfloor \ge 1$ yields
\beqn
\begin{split}
\|\hat h_\alpha-\hat h_\beta\|_1
& \le C(\beta-\alpha)^{-\frac13\beta_*(1- \beta_*)}(n^{-\frac1{\beta_*}} + (\beta-\alpha)^{\frac13(1- \beta_*)})\max((\log n)^\frac1{\beta_*},|{\log(\beta-\alpha)}|)
\\ 
& \le C(\beta-\alpha)^{\frac13(1- \beta_*)^2}\max(|{\log (\beta-\alpha)}|^\frac1{\beta_*},|{\log(\beta-\alpha)}|)
\\
& \le C(\beta-\alpha)^{\frac13(1- \beta_*)^2} |{\log (\beta-\alpha)}|^\frac1{\beta_*}
\end{split}
\eeqn
with~$C$ depending on $\beta_*$ only, where the last estimate uses $\beta-\alpha\le \beta_*<1$. This finishes the proof of the second part.

The proof of Theorem~\ref{thm:SRB_continuous} is now complete.
\end{proof}


\section{Proof of main theorem}\label{sec:proof_main}

We are now ready to enter the proof of Theorem~\ref{thm:main}. We begin by recording certain facts concerning general values $\beta_*\in(0,1)$, which are essential for the proof, in Section~\ref{sec:proof_main_1}. Parts (i) and (ii) of the theorem are proved in Sections~\ref{sec:proof_main_2} and~\ref{sec:proof_main_3}, respectively.


\subsection{Preliminaries ($0<\beta_*<1$)}\label{sec:proof_main_1} Notice first that it will be enough to establish the theorem for all~$f\in C^1([0,1])$: Since~$C^1([0,1])$ is a subalgebra of~$C([0,1])$ which contains the constant functions and separates points, it is dense by the Stone--Weierstrass theorem. Hence, given $g\in C([0,1])$ and~$\ve>0$, there exists~$f\in C^1([0,1])$ such that
\beqn
\sup_{t\in[0,1]}\left|\int_0^{t} g_{n,\round{ns}}(x)\,d s - \int_0^t \hat\mu_s(g)\,d s\right| \le \sup_{t\in[0,1]}\left|\int_0^{t} f_{n,\round{ns}}(x)\,d s - \int_0^t \hat\mu_s(f)\,d s\right| + \ve
\eeqn
holds for all~$x$ and all~$n$.

Let $\{I_1,\dots\,I_m\}$ be the regularity partition of $[0,1]$ associated to~$\gamma$. We may assume $0=\tau_1<\tau_2<\dots <\tau_m<\tau_{m+1}=1$, where $\tau_\ell$ and $\tau_{\ell+1}$ are the endpoints of the interval~$I_\ell$. There exists a constant $C_\gamma>0$ such that
\beq\label{eq:gamma_Holder}
|\gamma_t - \gamma_s| \le C_\gamma|t-s|^\theta
\eeq
holds for all $\tau_\ell < s,t < \tau_{\ell+1}$ and all $\ell$, and
\beqn
\sup_{t\in[0,1]}|\alpha_{n,\round{nt}} - \gamma_t| \le C_\gamma n^{-\theta}
\eeqn
holds for all $n\ge 1$. Since
$
|\alpha_{n,k} - \alpha_{n,j}| \le |\alpha_{n,nkn^{-1}} - \gamma_{kn^{-1}}| + |\alpha_{n,njn^{-1}} - \gamma_{jn^{-1}}| + |\gamma_{kn^{-1}} - \gamma_{jn^{-1}}|
$,
the bounds above imply that
\beq\label{eq:alpha_distance}
|\alpha_{n,k} - \alpha_{n,j}| \le C_\gamma n^{-\theta}(2 + |j-k|^\theta)
\eeq
whenever $n\tau_\ell < j,k < n\tau_{\ell+1}$ holds for some~$\ell$.

Note that there exist $0<\beta_*<1$ such that $\gamma([0,1])\subset[0,\beta_*]$ and $\alpha_{n,k}\in[0,\beta_*]$ for $0\le k\le n$ and sufficiently large values of $n$. Since we are only interested in the limit $n\to\infty$, we will assume --- without loss of generality --- that the latter condition holds starting from $n=1$.

We will shortly need the next lemma. For brevity, denote $\cL_{n,k}=\cL_{\alpha_{n,k}}$ and $\hat h_{n,k} = \hat h_{\alpha_{n,k}}$.

\begin{lem}\label{lem:pushforward_convergence}
Let $\mu$ be a probability measure with density $h\in\cC_*$, and let $\mu_{n,k}$ be its pushforward with density $h_{n,k} = \cL_{n,k}\cdots\cL_{n,1} h$. There exist constants $c_0,c_1>0$ and $p_0,p_1\in(0,1)$ such that
\beq\label{eq:pushforward_convergence_1}
\|h_{n,k} - \hat h_{n,k}\|_1 \le c_0 n^{-p_0}
\eeq
whenever $n(\tau_\ell + c_1n^{p_1-1}) < k < n\tau_{\ell+1}$ holds for some~$\ell$. Moreover, given a bounded function $f:[0,1]\to\bR$,
\beq\label{eq:pushforward_convergence_2}
|\mu(f_{n,\round{nt}}) - \hat\mu_{\gamma_t}(f)| \le c_0\|f\|_\infty n^{-p_0}
\eeq
whenever $\tau_\ell + c_1n^{p_1-1} < t < \tau_{\ell+1}$ holds for some~$\ell$.
\end{lem}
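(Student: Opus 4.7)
The plan is to prove~\eqref{eq:pushforward_convergence_1} first and then to deduce~\eqref{eq:pushforward_convergence_2} from it. The key idea is to interpolate between $h_{n,k}$ and $\hat h_{n,k}$ via a ``frozen'' iteration $\cL_{n,k}^j h_{n,k-j}$, where $1 \le j \le k - n\tau_\ell$ is a parameter to be optimized. Writing
\begin{equation*}
h_{n,k} - \hat h_{n,k} = \bigl(\cL_{n,k}\cdots\cL_{n,k-j+1} - \cL_{n,k}^j\bigr)h_{n,k-j} + \cL_{n,k}^j\bigl(h_{n,k-j} - \hat h_{n,k}\bigr),
\end{equation*}
the second term is controlled by Lemma~\ref{lem:Aimino}, since both $h_{n,k-j}$ and $\hat h_{n,k}$ lie in $\cC_*$ with unit $L^1$ norm (the cone is invariant under each $\cL_{n,i}$, and $\cL_{n,k}\hat h_{n,k} = \hat h_{n,k}$). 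This yields a bound of order $\rho(j) = j^{-(1/\beta_* - 1)}(\log j)^{1/\beta_*}$.

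For the first term I would telescope the operators themselves by
\begin{equation*}
\cL_{n,k}\cdots\cL_{n,k-j+1} - \cL_{n,k}^j = \sum_{i=0}^{j-1} \cL_{n,k}^i\,(\cL_{n,k-i} - \cL_{n,k})\,\cL_{n,k-i-1}\cdots\cL_{n,k-j+1},
\end{equation*}
with the rightmost product read as the identity when $i = j-1$. Applied to $h_{n,k-j}$, the factor $\cL_{n,k-i-1}\cdots\cL_{n,k-j+1}h_{n,k-j} = h_{n,k-i-1}$ is again a cone density of unit mass. Using the $L^1$-contraction of $\cL_{n,k}$ together with~\eqref{eq:transfer_perturbation} and the H\"older estimate~\eqref{eq:alpha_distance} (which is available because the hypothesis forces both $k-j$ and $k$ to lie in $(n\tau_\ell, n\tau_{\ell+1})$), each summand is at most $C(n^{-\theta}(2 + i^\theta))^{(1-\beta_*)/3}\log n$. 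Summing over $i$ gives a total contribution of order $n^{-\theta(1-\beta_*)/3}\,j^{1+\theta(1-\beta_*)/3}\,\log n$.

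Balancing this against $\rho(j)$ by setting $j = \lfloor n^{p_1} \rfloor$ with
\begin{equation*}
p_1 = \frac{\theta(1-\beta_*)/3}{\theta(1-\beta_*)/3 + 1/\beta_*} \in (0,1)
\end{equation*}
produces $\|h_{n,k} - \hat h_{n,k}\|_1 \le C n^{-p_0'}(\log n)^a$ for some $p_0', a > 0$, which is dominated by $c_0 n^{-p_0}$ for any $p_0$ slightly below $p_0'$. The admissibility condition $k - j > n\tau_\ell$ then reads $k > n\tau_\ell + n^{p_1}$, which is exactly the stated hypothesis $n(\tau_\ell + c_1 n^{p_1 - 1}) < k$ provided $c_1$ is chosen large enough. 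To obtain~\eqref{eq:pushforward_convergence_2}, I would write $\mu(f_{n,\round{nt}}) = \int f\,h_{n,\round{nt}}\,dm$ and $\hat\mu_{\gamma_t}(f) = \int f\,\hat h_{\gamma_t}\,dm$, split the difference of densities as $(h_{n,\round{nt}} - \hat h_{n,\round{nt}}) + (\hat h_{\alpha_{n,\round{nt}}} - \hat h_{\gamma_t})$, apply~\eqref{eq:pushforward_convergence_1} to the first piece and the SRB-perturbation bound~\eqref{eq:SRB_perturbation} together with $|\alpha_{n,\round{nt}} - \gamma_t| \le C_\gamma n^{-\theta}$ to the second, and finally shrink $p_0$ if necessary to absorb the logarithmic factors.

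The principal obstacle is keeping every parameter that shows up in the telescoping inside a single H\"older piece $I_\ell$ of $\gamma$, since~\eqref{eq:alpha_distance} is only guaranteed there; this is what forces the lower bound $k > n\tau_\ell + c_1 n^{p_1}$ and caps the telescoping length at polynomial order in $n$. The ensuing trade-off, between the perturbation cost (which grows with $j$) and the mixing cost (which decreases with $j$ like $\rho(j)$), is what pins down the specific admissible values of $p_0$ and $p_1$ as fractions of the correlation-decay exponent $1/\beta_* - 1$.
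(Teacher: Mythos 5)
Your proposal is correct and follows essentially the same argument as the paper: split $h_{n,k}-\hat h_{n,k}$ into a memory-loss term handled by Lemma~\ref{lem:Aimino} and a telescoping term handled by \eqref{eq:transfer_perturbation} together with \eqref{eq:alpha_distance}, balance the block length $j\sim n^{p_1}$ against the decay rate $\rho$, and deduce \eqref{eq:pushforward_convergence_2} from \eqref{eq:pushforward_convergence_1} and \eqref{eq:SRB_perturbation}. The only (immaterial) difference is that you apply the frozen power $\cL_{n,k}^j$ to the difference $h_{n,k-j}-\hat h_{n,k}$ and telescope on $h_{n,k-j}$, while the paper does the mirror-image split, telescoping on the fixed density $\hat h_{n,k}$; both work because every density involved remains in $\cC_*$.
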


For the purposes of this paper, the error rate in~\eqref{eq:pushforward_convergence_2} is unnecessary. Moreover,~\eqref{eq:pushforward_convergence_1} is only used in the proof of~\eqref{eq:pushforward_convergence_2}. However, both bounds will be useful for establishing finer statistical properties of the intermittent QDS later on. Information on the system-parameter dependence of the constants can be extracted from the proof below, which we leave to the interested reader.

\begin{proof}[Proof of Lemma~\ref{lem:pushforward_convergence}]
Since $\cL_{n,k}\hat h_{n,k} = \hat h_{n,k}$, we have
\beqn
\begin{split}
 h_{n,k} - \hat h_{n,k} & = 
\cL_{n,k}\cdots \cL_{n,k-K+1}( h_{n,k-K}-\hat h_{n,k})
+ (\cL_{n,k}\cdots \cL_{n,k-K+1}-\cL_{n,k}^{K})\hat h_{n,k} 
\end{split}
\eeqn
whenever $1\le K<k$.
In order to bound $h_{n,k} - \hat h_{n,k}$ in $L^1$, note that Lemma~\ref{lem:Aimino} implies
\beqn
\| \cL_{n,k}\cdots \cL_{n,k-K+1}(h_{n,k-K}-\hat h_{n,k})  \|_1 \le C_0(\|h_{n,k-K}\|_1+\|\hat h_{n,k}\|_1) \rho(K) \le 2C_0\rho(K)
\eeqn
Recalling that $\cL_\alpha$ is an $L^1$ contraction and using~\eqref{eq:transfer_perturbation} in Theorem~\ref{thm:SRB_continuous},
\beqn
\begin{split}
& \|(\cL_{n,k}\cdots \cL_{n,k-K+1}-\cL_{n,k}^{K})\hat h_{n,k}\|_1 
\\
& = \left\| \sum_{j=k-K+1}^k \cL_{n,k}\cdots \cL_{n,j+1}(\cL_{n,j}-\cL_{n,k})\cL_{n,k}^{j - (k-K+1)}\hat h_{n,k} \right\|_1
\\
& \le \sum_{j=k-K+1}^k \| (\cL_{n,j}-\cL_{n,k})\hat h_{n,k} \|_1
 \le K \max_{k-K+1\le j\le k} \| (\cL_{n,j}-\cL_{n,k})\hat h_{n,k} \|_1
\\
& \le C_2 K \max_{k-K+1\le j\le k} |\alpha_{n,j}-\alpha_{n,k}|^{\frac13(1- \beta_*)}|{\log|\alpha_{n,j}-\alpha_{n,k}|}|
\\
& \le C K \max_{k-K+1\le j\le k} |\alpha_{n,j}-\alpha_{n,k}|^{\frac14(1- \beta_*)}
\end{split}
\eeqn
for a constant depending on $\beta_*$. Assuming
\beq\label{eq:Kk}
n\tau_\ell + K < k < n\tau_{\ell+1}
\eeq
for some $\ell$, we can now use~\eqref{eq:alpha_distance}, which yields
\beqn
\begin{split}
& \|(\cL_{n,k}\cdots \cL_{n,k-K+1}-\cL_{n,k}^{K})\hat h_{n,k}\|_1 
\le C K [C_\gamma n^{- \theta}(2 + K^ \theta)]^{\frac14(1- \beta_*)}
\\
& \le C K [3C_\gamma n^{- \theta}K^ \theta]^{\frac14(1- \beta_*)} \le Cn^{-\theta\frac14(1- \beta_*)}K^{1+\theta\frac14(1- \beta_*)}
\end{split}
\eeqn
for constants $C$ depending only on $\beta_*$ and $\theta$ and $\gamma$. Writing $\kappa = \frac14(1- \beta_*)$,
we thus get
\beqn
\begin{split}
\| h_{n,k} - \hat h_{n,k}\|_1 
& \le 2C_0K^{-(\frac1{\beta_*}-1)}(\log K)^{\frac1{\beta_*}} + C n^{- \theta\kappa}K^{1+ \theta\kappa}
\\
& \le CK\!\left(K^{-\frac1{\beta_*}}(\log K)^{\frac1{\beta_*}} + n^{- \theta\kappa}K^{\theta\kappa}\right)
\end{split}
\eeqn
whenever~\eqref{eq:Kk} holds. Fixing $K = \lceil n^{ \theta\kappa (\frac1{\beta_*} +  \theta\kappa)^{-1}}\rceil$ (so that $K^{-\frac1{\beta_*}} \approx  n^{- \theta\kappa}K^{ \theta\kappa}$) yields
\beqn
\| h_{n,k} - \hat h_{n,k}\|_1 \le Cn^{ \theta\kappa (\frac1{\beta_*} +  \theta\kappa)^{-1}(1-\frac1{\beta_*})} (\log n)^{\frac1{\beta_*}} \le c_0n^{-p_0},
\eeqn
where $p_0$ depends on $\theta$ and $\beta_*$, and $c_0$ depends additionally on~$\gamma$. This proves the first claim of the lemma, the values of $c_1$ and $p_1$ being determined by the choice of~$K$.

In order to prove second claim, note first that $\mu(f_{n,\round{nt}}) = \mu_{n,\round{nt}}(f)$. Thus it suffices to bound $\|h_{n,\round{nt}} - \hat h_{\gamma_t}\|_1$. But, for $k = \round{nt}$ satisfying~\eqref{eq:Kk}, we have
\beqn
\begin{split}
\|h_{n,\round{nt}} - \hat h_{\gamma_t}\|_1 & \le \|h_{n,\round{nt}} - \hat h_{n,\round{nt}}\|_1 + \|\hat h_{n,\round{nt}} - \hat h_{\gamma_t}\|_1
\\
& \le c_0n^{-p_0} + C|\alpha_{n,\round{nt}}-\gamma_t|^{\frac14(1-\beta_*)^2}
\\
& \le c_0n^{-p_0} + C n^{- \theta\frac14(1-\beta_*)^2},
\end{split}
\eeqn
with the aid of~\eqref{eq:SRB_perturbation}. The constant $C$ depends only on $\beta_*$ and $\theta$ and $\gamma$. Redefining~$c_0$ and~$p_0$ proves also the second claim. 
\end{proof}

\pagebreak
Our task is to bound
\beqn
\zeta_n(x,t) - \zeta(t) = \bar\zeta_{n}(x,t) + \int_0^t \mu(f_{n,\round{ns}}) - \hat\mu_{\gamma_s}(f) \,ds
\eeqn
uniformly in $t$, where
\beqn
\bar\zeta_{n}(x,t) = \int_0^t \bar f_{n,\round{ns}}(x) \,ds
\quad\text{and}\quad
\bar f_{n,\round{ns}}(x) = f_{n,\round{ns}}(x) - \mu(f_{n,\round{ns}}).
\eeqn
Since $f$ is bounded, there are no integrability issues; in particular, Corollary~\ref{cor:SRB_mean_continuous} and the regularity of the curve~$\gamma$ imply that $s\mapsto\hat\mu_{\gamma_s}(f)$ is piecewise continuous. We can proceed with an application of~\eqref{eq:pushforward_convergence_2} in Lemma~\ref{lem:pushforward_convergence}:
\beqn
\begin{split}
& \sup_{t\in[0,1]}\left|\int_0^t \mu(f_{n,\round{ns}}) - \hat\mu_{\gamma_s}(f) \,ds\right| 
\le \int_0^1 |\mu(f_{n,\round{ns}}) - \hat\mu_{\gamma_s}(f)| \,ds
\\
& \qquad = \sum_{\ell=1}^m\int_{\tau_\ell}^{\tau_\ell+c_1n^{p_1-1}} |\mu(f_{n,\round{ns}}) - \hat\mu_{\gamma_s}(f)| \,ds 
+ \sum_{\ell=1}^m \int_{\tau_\ell+c_1n^{p_1-1}}^{\tau_{\ell+1}} |\mu(f_{n,\round{ns}}) - \hat\mu_{\gamma_s}(f)| \,ds
\\
& \qquad \le 2\|f\|_\infty m c_1n^{p_1-1} +  c_0\|f\|_\infty n^{-p_0},
\end{split}
\eeqn
where the last bound tends to zero with increasing~$n$. 

Thus, it remains to bound $\bar\zeta_n$ in what follows.
For convenience, we write $\bar\zeta_{n}(t)$ for the function $x\mapsto\bar\zeta_{n}(x,t)$. Roughly, the common strategy in the two cases of the theorem is to obtain moment bounds on $\zeta_n(t)$, for fixed values of~$t$. The uniform Lipschitz continuity 
\beqn
|\bar\zeta_n(x,t) - \bar\zeta_n(x,s)| = \left|\int_s^t \bar f_{n,\round{nr}}\,dr\right| \le 2\|f\|_\infty |t-s|
\eeqn
is then exploited for dealing with the uncountably many values~$t$ can assume. 


\subsection{Case $\frac12\le \beta_*<1$: proof of part (i)}\label{sec:proof_main_2}

This case is based on the following second moment bound:

\begin{lem}
Suppose $\beta_*\in[\frac12,1)$. Given $\delta \in (0,\frac1{\beta_*}-1)$, there exists a constant $C_* = C_*(\beta_*,\delta)>0$ such that
\beq\label{eq:L^2-bound}
\mu\!\left(|\bar\zeta_{n}(t)|^2\right) \le C_*\|f\|_\infty\|f\|_{C^1} n^{-\delta} t^{2-\delta}
\eeq
for all $n$ and $t$.\footnote{For completeness, we remark that if $\beta_*\in(0,\frac12)$, the corresponding bound is $C(\beta_*)\|f\|_\infty\|f\|_{C^1} n^{-1} t$, which follows from the summability of $\rho$.}
\end{lem}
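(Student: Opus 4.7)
The plan is to expand the square and then feed a two-point correlation estimate into the resulting double integral. By Fubini and the symmetry $s\leftrightarrow r$,
\[
\mu(|\bar\zeta_n(t)|^2) = 2\int_0^t\!\int_0^r \mu(\bar f_{n,\lfloor ns\rfloor}\bar f_{n,\lfloor nr\rfloor})\, ds\, dr,
\]
so it suffices to bound the covariance $\mu(\bar f_{n,\lfloor ns\rfloor}\bar f_{n,\lfloor nr\rfloor})$ for $0\le s\le r$ and integrate that bound.

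For the covariance I would apply Theorem~\ref{thm:multi} with $m=1$, $k=2$, $f_0 = 1$, $f_1 = f_2 = f$, $n_1 = \lfloor ns\rfloor$ and $n_2 = \lfloor nr\rfloor$, so that $F_1 = f_{n,\lfloor ns\rfloor}$, $G_1 = f_{n,\lfloor nr\rfloor}$, and $\mu(G_1 F_1) - \mu(G_1)\mu(F_1)$ equals the covariance above. Since $\|1\|_{C^1} = 1$, the theorem yields
\[
|\mu(\bar f_{n,\lfloor ns\rfloor}\bar f_{n,\lfloor nr\rfloor})| \le 16\, C_0 C_1^2\, \|f\|_\infty \|f\|_{C^1}\, \rho(\lfloor nr\rfloor - \lfloor ns\rfloor),
\]
with $\mu$ being a probability measure whose density lies in $\cC_*$, as is in force throughout the section.

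Because $\delta < 1/\beta_* - 1$, the logarithmic factor in $\rho(k) = k^{-(1/\beta_*-1)}(\log k)^{1/\beta_*}$ is absorbed: $\rho(k)\le C_\delta k^{-\delta}$ for $k\ge 1$, and hence $\sum_{k=0}^N \rho(k)\le C_\delta N^{1-\delta}$. Partitioning $[0,r]$ by the level sets of $s\mapsto \lfloor ns\rfloor$ gives
\[
\int_0^r \rho(\lfloor nr\rfloor - \lfloor ns\rfloor)\, ds \le \frac{1}{n}\sum_{k=0}^{\lfloor nr\rfloor}\rho(k) \le C_\delta\, n^{-\delta}\, r^{1-\delta},
\]
and integrating in $r\in[0,t]$ contributes $t^{2-\delta}/(2-\delta)$. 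Combining these estimates yields \eqref{eq:L^2-bound}.

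The delicate analytic work has been packaged into Theorem~\ref{thm:multi}, so from this vantage point the calculation is essentially a second-moment estimate. The only genuine point requiring care is arranging the sum of $\rho$ so that the logarithm in its decay rate does not spoil the target power: taking $\delta$ strictly below the threshold $1/\beta_* - 1$ is exactly what allows this. The range $\beta_* \in [1/2, 1)$ is the nontrivial regime where $\rho$ fails to be summable yet still delivers a polynomial rate, whereas for $\beta_* < 1/2$ the summability of $\rho$ directly produces the sharper bound $\lesssim n^{-1} t$ mentioned in the footnote.
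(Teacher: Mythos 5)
Your proof is correct and follows essentially the same route as the paper: expand the second moment, bound the two-point correlation via Theorem~\ref{thm:multi} (with the covariance of the centered observables equal to that of the uncentered ones), absorb the logarithm in $\rho$ into the exponent $\delta<\tfrac1{\beta_*}-1$, and integrate --- the paper merely packages the discretization through an auxiliary function $\phi$ satisfying $\rho(\lfloor ns\rfloor-\lfloor nr\rfloor)\le C\phi(ns-nr)$ rather than summing $\rho$ over integer lags as you do. The only point to tidy is the regime $r<1/n$, where $\lfloor nr\rfloor=0$ and your claim $\sum_{k=0}^{N}\rho(k)\le C_\delta N^{1-\delta}$ fails at $N=0$; there the inner integral is just $r\,\rho(0)=r\le n^{-\delta}r^{1-\delta}$, so the stated bound still holds (the paper's case split $nt\ge 2$ versus $nt<2$ handles exactly this).
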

\begin{proof}
Fix $\delta \in (0,\frac1{\beta_*}-1)$ and define the auxiliary function
\beqn
\phi(u) = 
\begin{cases}
u^{-\delta}, & u\ge 2 \\
1, & 0\le u < 2.
\end{cases}
\eeqn
There exists a constant $C_\phi = C_\phi(\beta_*,\delta)$ such that $\rho(n)\le C_\phi\phi(n)$ holds for all $n\ge 0$. Moreover, elementary computations show that
\beqn
\phi(\round{v}-\round{u}) \le \phi(\tfrac12(v-u)) \le 8^\delta\phi(v-u), \quad u\le v.
\eeqn
Thus, by Theorem~\ref{thm:multi},
\beqn
\begin{split}
\mu(|\bar\zeta_n(t)|^2) & =  \int_0^t \! \int_0^t \mu(\bar f_{n,\round{ns}}\bar f_{n,\round{nr}})\,dr\,ds = 2 \int_0^t \! \int_0^s \mu(\bar f_{n,\round{ns}}\bar f_{n,\round{nr}})\,dr\,ds
\\
& \le 64C_0C_1\int_0^t \! \int_0^s\|f-\mu(f_{n,\round{ns}})\|_\infty\|f-\mu(f_{n,\round{nr}})\|_{C^1}  \rho(\round{ns}-\round{nr}) \,d r\,d s
\\
& \le 64C_0C_1 8^\delta C_\phi 4 \|f\|_\infty\|f\|_{C^1} \int_0^t \! \int_0^s \phi(ns-nr) \,d r\,d s
\\
& = 256C_0C_1 8^\delta C_\phi\|f\|_\infty\|f\|_{C^1} \int_0^t \! \int_0^s \phi(nr) \,d r\,d s
\\
& = 256C_0C_1 8^\delta C_\phi\|f\|_\infty\|f\|_{C^1}\cdot n^{-1} \int_0^t \! \int_0^{ns} \phi(r) \,d r\,d s
\\
& \le 256C_0C_1 8^\delta C_\phi\|f\|_\infty\|f\|_{C^1}\cdot n^{-1} t \int_0^{nt} \phi(r) \,d r
\end{split}
\eeqn
Assuming $nt\ge 2$,
\beqn
\begin{split}
\int_0^{nt} \phi(r) \,d r = 2 + \tfrac1{1-\delta}((nt)^{1-\delta}-2^{1-\delta}) \le 2 + \tfrac1{1-\delta}(nt)^{1-\delta} \le (2^\delta + \tfrac1{1-\delta})(nt)^{1-\delta}.
\end{split}
\eeqn
The same bound applies in the case $0\le nt<2$, as
\beqn
\int_0^{nt} \phi(r) \,d r = nt = (nt)^\delta (nt)^{1-\delta} \le 2^\delta (nt)^{1-\delta} \le (2^\delta + \tfrac1{1-\delta})(nt)^{1-\delta}.
\eeqn
Collecting the bounds proves the lemma.
\end{proof}

Let $(K_n)_{n\ge 1}$ be an increasing sequence of positive integers, to be fixed later. Define the finite parameter sets
\beqn
\cT_n = \{jK_n^{-1}\,:\,j=1,\dots,K_n\}, \quad n\ge 1.
\eeqn
Given $t\in[0,1]$, let $t_n$ be the smallest element of~$\cT_n$ larger than~$t$. Then
\beqn
\begin{split}
\sup_{t\in[0,1]}|\bar\zeta_{n}(t)| 
& \le \sup_{t\in\cT_n}|\bar\zeta_{n}(t)| + \sup_{t\in[0,1]}|\bar\zeta_{n}(t) - \bar\zeta_{n}(t_n)| 
\\
& \le  \sup_{t\in\cT_n}|\bar\zeta_{n}(t)| + 2\|f\|_\infty K_n^{-1}.
\end{split}
\eeqn
Since $K_n$ is increasing, we thus have
\beq\label{eq:finite}
\limsup_{n\to\infty} \sup_{t\in[0,1]}|\bar\zeta_{n}(x,t)| \le \limsup_{n\to\infty}  \sup_{t\in\cT_n}|\bar\zeta_{n}(x,t)|
\eeq
for every~$x$. Next, following~\cite{Lyons_1988}, we recall a Cauchy condensation criterion for the summability of a subsequence.
\begin{lem}\label{lem:Cauchy}
Let the numbers $a_n\ge 0$, $n\ge 1$, satisfy
\beqn
\sum_{n=1}^\infty \frac{a_n}n < \infty.
\eeqn
Then there exists an increasing sequence $(n_k)_{k\ge 1}$ such that
\beqn
\sum_{k = 1}^\infty a_{n_k} < \infty
\quad\text{and}\quad
\lim_{k\to\infty}\frac{n_{k+1}}{n_k} = 1.
\eeqn
\end{lem}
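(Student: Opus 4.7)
The plan is to construct the desired subsequence by partitioning $\bN$ into consecutive blocks whose widths grow while their relative size shrinks, and to pick from each block one index at which $a_n$ is small.

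The first ingredient is a classical Abel--Kronecker device: given any nonnegative series $\sum_j c_j<\infty$, there exists a positive decreasing sequence $\epsilon_j\downarrow 0$ such that $\sum_j c_j/\epsilon_j<\infty$ as well. The choice $\epsilon_j=\sqrt{R_j}$ with $R_j=\sum_{i\ge j}c_i$ does the job, because $c_j/\sqrt{R_j}\le 2(\sqrt{R_j}-\sqrt{R_{j+1}})$ telescopes. I would apply this to $c_j=a_j/j$, whose summability is the hypothesis of the lemma, to produce a decreasing $\epsilon_j\downarrow 0$ with $\sum_j a_j/(j\epsilon_j)<\infty$.

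With such $\epsilon$ in hand, I would set $N_1=1$ and recursively $N_{k+1}=\max\{N_k+1,\,\lceil N_k(1+\epsilon_{N_k})\rceil\}$, and take $n_k\in[N_k,N_{k+1})$ to be any index minimizing $a$ on that block. Since $\epsilon_{N_k}\to 0$, we have $N_{k+1}/N_k\to 1$, and consequently $n_{k+1}/n_k\le N_{k+2}/N_k\to 1$. For the summability of $\sum_k a_{n_k}$, the pigeonhole bound $a_{n_k}\le(N_{k+1}-N_k)^{-1}\sum_{j\in[N_k,N_{k+1})}a_j$ combined with a change of order of summation gives $\sum_k a_{n_k}\le\sum_j a_j/(N_{k(j)+1}-N_{k(j)})$, where $k(j)$ denotes the block containing $j$.

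The only real technical point is to bound $(N_{k(j)+1}-N_{k(j)})^{-1}$ by a constant multiple of $1/(j\epsilon_j)$, so that the last sum is dominated by the already-summable $\sum_j a_j/(j\epsilon_j)$. Here the decreasing choice of $\epsilon$ is essential: for $j\in[N_k,N_{k+1})$ one has $N_{k+1}-N_k\ge N_k\epsilon_{N_k}\ge N_k\epsilon_j$ by monotonicity, and the easy comparison $N_k\ge j/3$ (valid once $\epsilon_{N_k}\le 1$, hence for all sufficiently large $k$) supplies the missing factor of $j$. The finitely many small $k$'s with $\epsilon_{N_k}>1$ contribute only a finite amount and can be ignored, which completes the plan.
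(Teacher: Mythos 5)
Your proof is correct. Note that the paper does not actually prove this lemma: it is stated as a recalled ``Cauchy condensation criterion'' with a citation to Lyons (1988), so there is no in-paper argument to compare against. Your construction is a legitimate self-contained proof in the same spirit as the standard condensation arguments: the Abel--Dini step ($\epsilon_j=\sqrt{R_j}$ with $c_j/\sqrt{R_j}\le 2(\sqrt{R_j}-\sqrt{R_{j+1}})$, so $\sum_j c_j/\epsilon_j<\infty$) correctly manufactures a decreasing $\epsilon_j\downarrow 0$ with $\sum_j a_j/(j\epsilon_j)<\infty$, and the adaptive blocks $N_{k+1}=\max\{N_k+1,\lceil N_k(1+\epsilon_{N_k})\rceil\}$ do exactly what you claim: the ceiling guarantees $N_{k+1}-N_k\ge N_k\epsilon_{N_k}$ in all cases, monotonicity of $\epsilon$ gives $N_{k+1}-N_k\ge N_k\epsilon_j$ for $j$ in the $k$th block, $N_k\ge j/3$ holds on that block once $\epsilon_{N_k}\le 1$, and $n_{k+1}/n_k\le N_{k+2}/N_k\le(1+\epsilon_{N_{k+1}}+N_{k+1}^{-1})(1+\epsilon_{N_k}+N_k^{-1})\to 1$ since $N_k\to\infty$. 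The pigeonhole/averaging bound and the interchange of summation are also fine, since the blocks partition the integers. The only (cosmetic) gap is that $\epsilon_j=\sqrt{R_j}$ may vanish when $a_n=0$ for all large $n$, making $c_j/\epsilon_j$ ill-defined; either dispose of that trivial case separately (take $n_k=k$) or use $\epsilon_j=\sqrt{R_j}+2^{-j}$, which is still decreasing to $0$ and keeps $\sum_j c_j/\epsilon_j<\infty$. With that one-line fix the argument is complete, and arguably more quantitative than the mere existence statement the paper imports from Lyons.
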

We implement the lemma with $a_n = \mu((\sup_{t\in\cT_n}|\bar\zeta_{n}(t)|)^2)$.
Observe that
\beqn
\mu\!\left(\left(\sup_{t\in\cT_n}|\bar\zeta_{n}(t)|\right)^2\right) = \mu\!\left(\sup_{t\in\cT_n}|\bar\zeta_{n}(t)|^2\right) \le \sum_{t\in\cT_n} \mu\!\left(|\bar\zeta_{n}(t)|^2\right)
\eeqn
together with \eqref{eq:L^2-bound} yields
\beqn
\begin{split}
\sum_{n=1}^\infty \frac1n \mu\!\left(\left(\sup_{t\in\cT_n}|\bar\zeta_{n}(t)|\right)^2\right) 
&\le C_*\|f\|_\infty\|f\|_{C^1} \sum_{n=1}^\infty \frac1{n^{1+\delta}} \sum_{t\in\cT_n} t^{2-\delta}
\\
&\le C_*\|f\|_\infty\|f\|_{C^1} \sum_{n=1}^\infty \frac {K_n}{n^{1+\delta}}.
\end{split}
\eeqn
Choosing, say, $K_n = \round{\log n}$ guarantees that $\sum_{n=1}^\infty  K_n/n^{1+\delta}<\infty$. By Lemma~\ref{lem:Cauchy},
\beqn
\mu\!\left(\sum_{k=1}^\infty \left(\sup_{t\in\cT_{n_k}}|\bar\zeta_{n_k}(t)|\right)^2\right) = \sum_{k=1}^\infty \mu\!\left(\left(\sup_{t\in\cT_{n_k}}|\bar\zeta_{n_k}(t)|\right)^2\right) < \infty,
\eeqn
for some increasing sequence $(n_k)_{k\ge 1}$ satisfying $\lim_{k\to\infty}n_{k+1}/n_k = 1$. This implies that
\beqn
\lim_{k\to\infty}\sup_{t\in\cT_{n_k}}|\bar\zeta_{n_k}(x,t)| = 0
\eeqn
for almost every~$x$ with respect to~$\mu$. In view of~\eqref{eq:finite},
\beqn
\lim_{k\to\infty}\sup_{t\in [0,1]}|\bar\zeta_{n_k}(t)| = 0
\eeqn
almost everywhere.

Finally, recall that a sequence of random variables converges in probability if and only if every subsequence has a further subsequence which converges almost surely. Repeating the above proof, mutatis mutandis, for an arbitrary subsequence $\sup_{t\in [0,1]}|\bar\zeta_{n_k}(t)|$, $k\ge 1$, yields a further subsequence $\sup_{t\in [0,1]}|\bar\zeta_{n_{k_j}}(t)|$, $j\ge 1$, which converges almost surely.

The proof of part (i) of Theorem~\ref{thm:main} is now complete. \qed


\subsection{Case $\beta_*<\frac12$: proof of part (ii)}\label{sec:proof_main_3} In this case we cannot resort to the same method as in the previous one, because we want to prove almost sure convergence of the entire sequence, not just a subsequence. Following the approach of~\cite{Stenlund_2015} for abstract QDSs, we take advantage of fourth moment bounds.

Given~$f$ and~$\mu$, let
\beqn
c^{\ell,j}_n(k_1,\dots,k_\ell) = \mu(f_{n,k_1}\cdots f_{n,k_\ell}) - \mu(f_{n,k_1}\cdots f_{n,k_j}) \, \mu(f_{n,k_{j+1}}\cdots f_{n,k_\ell})
\eeqn
for all integers $2\le\ell \le 4$, $j\in\{1,\ell-1\}$ and $k_1,\dots,k_\ell\ge 0$. Note that if $c^{\ell,j}_n(k_1,\dots,k_\ell)$ is small, then the products $f_{n,k_1}\cdots f_{n,k_j}$ and $f_{n,k_{j+1}}\cdots f_{n,k_\ell}$ are nearly uncorrelated with respect to the initial distribution~$\mu$.  We also introduce the function
\beqn
\Phi(s) = 
\begin{cases}
s^{-1}(\log s)^{-2}, & s\ge 2
\\
2^{-1}(\log 2)^{-2}, & 0\le s< 2.
\end{cases}
\eeqn
The key property of $\Phi$ is its integrability.

\pagebreak
\begin{lem}\label{lem:QS_mean}
Let~$f:[0,1]\to\bR$ be a bounded measurable function and~$\mu$ a probability measure on~$[0,1]$. Suppose the following condition holds:
\smallskip
\begin{itemize}
\item[(A)] There exists $C>0$ such that
\beqn
|c^{\ell,j}_n(k_1,\dots,k_\ell)| \le C\Phi(k_{j+1}-k_j)
\eeqn
for all integers $2\le \ell \le 4$, $j\in\{1,\ell-1\}$ and $0\le k_1\le\dots\le k_\ell$. 
\end{itemize}
\medskip
Then
\beqn
\lim_{n\to\infty}\sup_{t\in [0,1]}|\bar\zeta_{n}(x,t)| = 0
\eeqn
for almost every~$x$ with respect to~$\mu$.
\end{lem}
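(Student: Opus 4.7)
I plan to prove the lemma via a fourth-moment Borel--Cantelli argument on a discretization of $[0,1]$, leveraging the uniform Lipschitz continuity $|\bar\zeta_n(x,t)-\bar\zeta_n(x,s)|\le 2\|f\|_\infty |t-s|$ to pass from pointwise to uniform convergence.

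The core step is a pointwise fourth-moment estimate of the form
\[
\mu(|\bar\zeta_n(t)|^4)\le \frac{Ct^3}{n\log^2(nt)},
\]
which rests on the bound
\[
|\mu(\bar g_1\bar g_2\bar g_3\bar g_4)|\le C\min(\Phi(d_1),\Phi(d_3))=C\Phi(\max(d_1,d_3))
\]
for $\bar g_i=\bar f_{n,\round{ns_i}}$ at ordered times $s_1\le s_2\le s_3\le s_4$ with gaps $d_i=\round{ns_{i+1}}-\round{ns_i}$. To derive this, I would write $\mu(\bar g_1\bar g_2\bar g_3\bar g_4)=\mu(g_1\bar g_2\bar g_3\bar g_4)-m_1\mu(\bar g_2\bar g_3\bar g_4)$, expand the centered product $\bar g_2\bar g_3\bar g_4=\sum_{S\subseteq\{2,3,4\}}(-1)^{|S|}\prod_{i\in S}m_i\prod_{j\notin S}g_j$, and apply assumption~(A) with split parameter $j=1$ (i.e.\ $c^{\ell,1}_n$ for $\ell=2,3,4$) to replace each $\mu(g_1\prod_{j\notin S}g_j)$ by $m_1\mu(\prod_{j\notin S}g_j)+O(\Phi(d_1))$. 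Summing, the $\pm m_1\mu(\bar g_2\bar g_3\bar g_4)$ contributions cancel exactly and only the $O(\Phi(d_1))$ remainder survives; the symmetric argument using $c^{\ell,\ell-1}$ yields the $\Phi(d_3)$ bound.

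The integration step then goes as follows. In gap coordinates $u_1=s_1$, $u_{i+1}=s_{i+1}-s_i$, enlarging the simplex to $[0,t]^4$, the integrations in $u_1$ and $u_3$ contribute a factor $t^2$, and
\[
\int_0^t\!\int_0^t\Phi(n\max(u_2,u_4))\,du_2\,du_4=2\int_0^t u\,\Phi(nu)\,du=\frac{2}{n^2}\int_0^{nt}\!r\Phi(r)\,dr.
\]
Since $r\Phi(r)=(\log r)^{-2}$ for $r\ge 2$, one has $\int_2^R(\log r)^{-2}\,dr=O(R/\log^2 R)$, giving the claimed bound. The $(\log)^{-2}$ factor built into $\Phi$ is exactly what is needed to make $\sum_n (n\log^2 n)^{-1}$ finite.

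Finally, I would fix $K_n=\lceil n^{1/2}\rceil$ and $\cT_n=\{jK_n^{-1}:1\le j\le K_n\}$ (any $K_n\to\infty$ with $\sum_n K_n(n\log^2 n)^{-1}<\infty$ works). Markov's inequality and a union bound yield
\[
\mu\!\left(\sup_{t\in\cT_n}|\bar\zeta_n(t)|>\varepsilon\right)\le \frac{CK_n}{\varepsilon^4 n\log^2 n},
\]
which is summable over $n$; Borel--Cantelli gives $\sup_{t\in\cT_n}|\bar\zeta_n(x,t)|\to 0$ for $\mu$-a.e.\ $x$, and the Lipschitz bound $\sup_{t\in[0,1]}|\bar\zeta_n(x,t)|\le \sup_{t\in\cT_n}|\bar\zeta_n(x,t)|+2\|f\|_\infty K_n^{-1}$ completes the argument. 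The main obstacle is the derivation of the 4-point bound above: assumption~(A) furnishes only the edge splits $j\in\{1,\ell-1\}$---no middle split $c^{4,2}$ is available---and the telescoping cancellation of the leading $m_1\mu(\bar g_2\bar g_3\bar g_4)$ terms is what makes a single edge split enough for the full centered 4-point correlation, while the complementarity of the $\Phi(d_1)$ and $\Phi(d_3)$ bounds produces the sharper $\Phi(\max(d_1,d_3))$ whose simplex integral picks up the decisive extra $\log^{-2}$ factor.
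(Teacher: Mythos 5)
Your proposal is correct, and it is in essence a self-contained reconstruction of what the paper simply outsources: the paper's proof consists of citing Lemma~5.1 of~\cite{Stenlund_2015} (condition~(A) implies $\sum_n\mu(|\bar\zeta_n(t)|^4)<\infty$ for each fixed $t$) and Lemma~4.1 of~\cite{Stenlund_2015} (pointwise a.s.\ convergence plus uniform Lipschitz continuity in $t$ upgrades to uniform a.s.\ convergence). Your derivation of the four-point bound is the right mechanism and it checks out: expanding the centered product and applying only the edge splits $c^{\ell,1}$ (resp.\ $c^{\ell,\ell-1}$) does make the leading terms telescope exactly against $m_1\mu(\bar g_2\bar g_3\bar g_4)$ (resp.\ $m_4\mu(\bar g_1\bar g_2\bar g_3)$), leaving an error $\le C(1+\|f\|_\infty)^3\Phi(d_1)$ (resp.\ $\Phi(d_3)$), and exploiting \emph{both} splits to get $\Phi(\max(d_1,d_3))$ is genuinely necessary: a single-gap bound would only give $\mu(|\bar\zeta_n(t)|^4)\lesssim t^3/n$, which is not summable, so the $(\log)^{-2}$ gain from the two-gap integral is exactly what condition~(A) with $j\in\{1,\ell-1\}$ and the definition of $\Phi$ are designed to produce. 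Where you differ from the paper's (cited) route is the uniformization step: Lemma~4.1 of~\cite{Stenlund_2015} needs no rates at all --- fixed-$t$ a.s.\ convergence on a countable dense set plus the equi-Lipschitz bound $|\bar\zeta_n(x,t)-\bar\zeta_n(x,s)|\le 2\|f\|_\infty|t-s|$ suffices --- whereas your Borel--Cantelli union bound over the growing grids $\cT_n$ with $K_n=\lceil n^{1/2}\rceil$ requires the quantitative moment bound uniformly over the grid; you do have it (on $\cT_n$ one has $nt\ge n^{1/2}$, so $\log^2(nt)\ge\tfrac14\log^2 n$ and $\sum_n K_n/(n\log^2 n)<\infty$), so this works, at the cost of carrying the explicit $t$- and $\log$-dependence that the softer argument avoids. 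Two small points of hygiene: the gaps are $d_1=\round{ns_2}-\round{ns_1}$, not $n(s_2-s_1)$, so passing to $\Phi(n\max(u_2,u_4))$ needs the same floor adjustment the paper makes for $\phi$ in the second-moment lemma ($\Phi(\round{v}-\round{u})\le\Phi(\tfrac12(v-u))\le C\Phi(v-u)$, harmless up to constants); and the stated bound $Ct^3/(n\log^2(nt))$ should be understood for, say, $nt\ge 2$, which is all you use since grid points satisfy $nt\ge n^{1/2}$.
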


\begin{proof}
By Lemma 5.1 in~\cite{Stenlund_2015}, Condition (A) implies
\beqn
\sum_{n=1}^\infty\mu(|\bar\zeta_n(t)|^4) < \infty
\eeqn
for all $t\in[0,1]$. In particular, given $t$, $\lim_{n\to\infty}\bar\zeta_n(x,t) = 0$, for almost every~$x$ with respect to $\mu$.  Moreover, the functions $t\mapsto\bar\zeta_n(x,t)$ are uniformly Lipschitz continuous. By Lemma~4.1 in~\cite{Stenlund_2015}, $\lim_{n\to\infty}\sup_{t\in [0,1]}|\bar\zeta_{n}(x,t)| = 0$.
\end{proof}

It remains to check that Condition (A) holds with $f\in C^1([0,1])$ and $\mu = m$. To that end, we set $f_0 = 1$ and $f_i = f$, $i\ge 1$, in Theorem~\ref{thm:multi}. This yields
\beqn
\begin{split}
|c^{\ell,j}_n(k_1,\dots,k_\ell)| 
& \le 4C_0 (2C_1)^{j+1}\|f\|_{C^1}^\ell \rho(k_{j+1}-k_j) 
\\
& \le 4C_0 \max(1,2C_1)^4\max(1,\|f\|_{C^1})^4 \rho(k_{j+1}-k_j).
\end{split}
\eeqn
Since $\rho$ is dominated by $\Phi$ under the standing assumption $\beta_*< \frac12$, we obtain~(A).

The result we have just proved also shows that the one-parameter family of measures $\mathscr{P} = (\hat\mu_{\gamma_t})_{t\in[0,1]}$ is a physical family of measures with a basin of full measure. Its uniqueness follows from the latter property of the basin and Corollary~2.5 in~\cite{Stenlund_2015}.

The proof of part (ii), and thus of Theorem~\ref{thm:main}, is now complete. \qed




\bigskip
\bigskip
\bibliography{Intermittent}{}

\begin{thebibliography}{10}

\bibitem{Aimino_etal_2015}
Romain Aimino, Huyi Hu, Matthew Nicol, Andrei T{\"o}r{\"o}k, and Sandro
  Vaienti.
\newblock Polynomial loss of memory for maps of the interval with a neutral
  fixed point.
\newblock {\em Discrete Contin. Dyn. Syst.}, 35(3):793--806, 2015.
\newblock Available from: \url{http://dx.doi.org/10.3934/dcds.2015.35.793},
  \href {http://dx.doi.org/10.3934/dcds.2015.35.793}
  {\path{doi:10.3934/dcds.2015.35.793}}.

\bibitem{ConzeRaugi_2007}
Jean-Pierre Conze and Albert Raugi.
\newblock Limit theorems for sequential expanding dynamical systems on
  {$[0,1]$}.
\newblock In {\em Ergodic theory and related fields}, volume 430 of {\em
  Contemp. Math.}, pages 89--121. Amer. Math. Soc., Providence, RI, 2007.
\newblock Available from: \url{http://dx.doi.org/10.1090/conm/430/08253}, \href
  {http://dx.doi.org/10.1090/conm/430/08253}
  {\path{doi:10.1090/conm/430/08253}}.

\bibitem{DobbsStenlund_2014}
Neil Dobbs and Mikko Stenlund.
\newblock Quasistatic dynamical systems.
\newblock Submitted.
\newblock Available from: \url{http://arxiv.org/abs/1504.01926}.

\bibitem{GaspardWang_1988}
P.~Gaspard and X.-J. Wang.
\newblock Sporadicity: between periodic and chaotic dynamical behaviors.
\newblock {\em Proc. Nat. Acad. Sci. U.S.A.}, 85(13):4591--4595, 1988.
\newblock Available from: \url{http://dx.doi.org/10.1073/pnas.85.13.4591},
  \href {http://dx.doi.org/10.1073/pnas.85.13.4591}
  {\path{doi:10.1073/pnas.85.13.4591}}.

\bibitem{GuptaOttTorok_2013}
Chinmaya Gupta, William Ott, and Andrei T{\"o}r{\"o}k.
\newblock Memory loss for time-dependent piecewise expanding systems in higher
  dimension.
\newblock {\em Math. Res. Lett.}, 20(1):141--161, 2013.
\newblock Available from: \url{http://dx.doi.org/10.4310/MRL.2013.v20.n1.a12},
  \href {http://dx.doi.org/10.4310/MRL.2013.v20.n1.a12}
  {\path{doi:10.4310/MRL.2013.v20.n1.a12}}.

\bibitem{Kawan_2014}
Christoph Kawan.
\newblock Metric entropy of nonautonomous dynamical systems.
\newblock {\em Nonauton. Dyn. Syst.}, 1:26--52, 2014.
\newblock Available from: \url{http://dx.doi.org/10.2478/msds-2013-0003}, \href
  {http://dx.doi.org/10.2478/msds-2013-0003}
  {\path{doi:10.2478/msds-2013-0003}}.

\bibitem{Kawan_2015}
Christoph Kawan.
\newblock Expanding and expansive time-dependent dynamics.
\newblock {\em Nonlinearity}, 28(3):669--695, 2015.
\newblock Available from: \url{http://dx.doi.org/10.1088/0951-7715/28/3/669},
  \href {http://dx.doi.org/10.1088/0951-7715/28/3/669}
  {\path{doi:10.1088/0951-7715/28/3/669}}.

\bibitem{LasotaYorke_1996}
Andrzej Lasota and James~A. Yorke.
\newblock When the long-time behavior is independent of the initial density.
\newblock {\em SIAM J. Math. Anal.}, 27(1):221--240, 1996.

\bibitem{LiveraniSaussolVaienti_1999}
Carlangelo Liverani, Beno{\^{\i}}t Saussol, and Sandro Vaienti.
\newblock A probabilistic approach to intermittency.
\newblock {\em Ergodic Theory Dynam. Systems}, 19(3):671--685, 1999.
\newblock Available from: \url{http://dx.doi.org/10.1017/S0143385799133856},
  \href {http://dx.doi.org/10.1017/S0143385799133856}
  {\path{doi:10.1017/S0143385799133856}}.

\bibitem{Lyons_1988}
Russell Lyons.
\newblock Strong laws of large numbers for weakly correlated random variables.
\newblock {\em Michigan Math. J.}, 35(3):353--359, 1988.
\newblock Available from: \url{http://dx.doi.org/10.1307/mmj/1029003816}, \href
  {http://dx.doi.org/10.1307/mmj/1029003816}
  {\path{doi:10.1307/mmj/1029003816}}.

\bibitem{Nandori_2012}
P\'eter N\'andori, Domokos Sz\'asz, and Tam\'as Varj\'u.
\newblock {A Central Limit Theorem for Time-Dependent Dynamical Systems}.
\newblock {\em {Journal of Statistical Physics}}, {146}({6}):{1213--1220},
  {MAR} {2012}.
\newblock Available from: \url{http://dx.doi.org/10.1007/s10955-012-0451-8},
  \href {http://dx.doi.org/10.1007/s10955-012-0451-8}
  {\path{doi:10.1007/s10955-012-0451-8}}.

\bibitem{OttStenlundYoung_2009}
William Ott, Mikko Stenlund, and Lai-Sang Young.
\newblock Memory loss for time-dependent dynamical systems.
\newblock {\em Mathematical Research Letters}, 16(3):463--475, 2009.
\newblock Available from:
  \url{http://www.aimsciences.org/journals/displayArticlesnew.jsp?paperID=9775%
}.

\bibitem{PomeauManneville_1980}
Yves Pomeau and Paul Manneville.
\newblock Intermittent transition to turbulence in dissipative dynamical
  systems.
\newblock {\em Comm. Math. Phys.}, 74(2):189--197, 1980.
\newblock Available from: \url{http://projecteuclid.org/euclid.cmp/1103907981}.

\bibitem{Stenlund_2015}
Mikko Stenlund.
\newblock An almost sure ergodic theorem for quasistatic dynamical systems.
\newblock {\em Preprint}.
\newblock Available from: \url{http://arxiv.org/abs/1507.02937}.

\bibitem{Stenlund_2011}
Mikko Stenlund.
\newblock Non-stationary compositions of {A}nosov diffeomorphisms.
\newblock {\em Nonlinearity}, 24:2991--3018, 2011.
\newblock \href {http://dx.doi.org/doi:10.1088/0951-7715/24/10/016}
  {\path{doi:doi:10.1088/0951-7715/24/10/016}}.

\bibitem{StenlundYoungZhang_2013}
Mikko Stenlund, Lai-Sang Young, and Hongkun Zhang.
\newblock Dispersing billiards with moving scatterers.
\newblock {\em Commun. Math. Phys.}, 322(3):909--955, 2013.
\newblock Available from: \url{http://dx.doi.org/10.1007/s00220-013-1746-6}.

\end{thebibliography}
\bibliographystyle{plainurl}


\vspace*{\fill}

\end{document}